\newtheorem{theorem}{\textbf{Theorem}}[section]
\newtheorem{lemma}[theorem]{\textbf{Lemma}}
\newtheorem{proposition}[theorem]{\textbf{Proposition}}
\newtheorem{claim}[theorem]{\textbf{Claim}}
\newtheorem{remark}[theorem]{\textbf{Remark}}
\numberwithin{equation}{section}
\numberwithin{figure}{section}
\g@addto@macro\th@plain{\thm@headpunct{}}
\newcommand\dps{\displaystyle}
\newcommand\bea{\begin{eqnarray}}
\newcommand\eea{\end{eqnarray}}
\newcommand\beaa{\begin{eqnarray*}}
\newcommand\eeaa{\end{eqnarray*}}
\title{On the propagation speed
 of the single monostable equation}
\date{}
\author{}
\begin{document}

\maketitle
\vspace{-30pt}
\begin{center}
{\large\bf
Chang-Hong Wu
\footnote{Department of Applied Mathematics, National Yang Ming Chiao Tung University, Hsinchu, Taiwan.

e-mail: {\tt changhong@math.nctu.edu.tw}}
\footnote{National Center for Theoretical Sciences, Taipei, Taiwan},
Dongyuan Xiao\footnote{Graduate School of Mathematical Science, The University of Tokyo, Tokyo, Japan.

e-mail: {\tt dongyuanx@hotmail.com}} and
Maolin Zhou\footnote{Chern Institute of Mathematics and LPMC, Nankai University, Tianjin, China.

e-mail: {\tt zhouml123@nankai.edu.cn}}
} \\
[2ex]
\end{center}

\begin{abstract}
In this paper, we first focus on the speed selection problem for the reaction-diffusion equation of the monostable type. By investigating the decay rates of the minimal traveling wave front, we propose a sufficient and necessary condition that reveals the essence of propagation phenomena. Moreover, since our argument relies solely on the comparison principle, it can be extended to more general monostable dynamical systems, such as nonlocal diffusion equations.
\\

\noindent{\underline{Key Words:} nonlocal diffusion equation, linear selection, traveling waves, Cauchy problem, long-time behavior.}\\

\noindent{\underline{AMS Subject Classifications:}  35K57 (Reaction-diffusion equations), 35B40 (Asymptotic behavior of solutions).}
\end{abstract}

\section{Introduction}

In 1937, Fisher \cite{Fisher} and Kolmogorov et al. \cite{KPP} introduced the Fisher-KPP equation:
\beaa
w_t=w_{xx}+f(w)=w_{xx}+w(1-w),\ \ t>0,\ x\in\mathbb{R},
\eeaa
to describe the spatial propagation of organisms, such as dominant genes and invasive species, in a homogeneous environment. It is well-known, as demonstrated in \cite{Fisher, KPP}, that under the KPP condition:
\begin{equation}\label{kpp condition}
f'(0)w\geq f(w)\ \text{for all}\ w\in[0,1],
\end{equation}
the spreading speed of \eqref{scalar equation} can be directly derived from its linearization at $w=0$:
$$w_t=w_{xx}+w.$$
This so-called "linear conjecture," which posits that nonlinear differential equations governing population spread always have the same velocity as their linear approximation, has been developed over more than 80 years through numerous examples. It is explicitly stated by Bosch et al. \cite{Bosch Metz Diekmann} and Mollison \cite{Mollison}.

For the general monostable equation,
\begin{equation}\label{scalar equation}
\left\{
\begin{aligned}
&w_t=w_{xx}+f(w),\ \ t>0,\ x\in\mathbb{R},\\
&w(0,x)=w_0(x),\ \ x\in\mathbb{R},
\end{aligned}
\right.
\end{equation}
where 
$f$ satisfying
\begin{equation*}
f(0)=f(1)=0,\ f'(0)>0>f'(1),\ \text{and}\ f(w)>0\ \text{for all}\ w\in(0,1),
\end{equation*}
Aronson and Weinberger \cite{Aronson Weinberger} showed the existence of a speed $$c^*\geq 2\sqrt{f'(0)}>0$$ indicating the 
spreading property
of the solution to the Cauchy problem \eqref{scalar equation}
as follows:
\begin{equation}\label{def:spreading speed}
\left\{
\begin{aligned}
&\lim_{t\to\infty}\sup_{|x|\ge ct}w(t,x)=0\ \ \mbox{for all} \ \ c>c^*;\\
&\lim_{t\to\infty}\sup_{|x|\le ct}|1-w(t,x)|=0\ \ \mbox{for all} \ \ c<c^*.
\end{aligned}
\right.
\end{equation}
\begin{itemize}
\item For the case $c^*=2\sqrt{f'(0)}$, it is called spreading speed linear selection; 
\item For the case $c^*>2\sqrt{f'(0)}$, it is called spreading speed nonlinear selection. 
\end{itemize}
We remark that, in general, the minimal speed $c^*$ depends on the shape of $f$ and cannot be characterized explicitly.

A typical example
for understanding the link between speed linear selection and nonlinear selection of a single reaction-diffusion equation
is as follows 
\cite{Hadeler Rothe}:
\bea\label{Hadeler-Rothe eq}
\partial_t w-w_{xx}=w(1-w)(1+sw),
\eea
where $s\geq 0$ is the continuously varying parameter.  Moreover, the KPP condition \eqref{kpp condition} is satisfied if and only if $0\le s\le 1$.
If $s>1$, \eqref{kpp condition} is not satisfied, and such 
$f(w;s)$
is called the weak Allee effect.
The minimal traveling wave speed $c^*(s)$ is characterized in \cite{Hadeler Rothe} as:
\begin{equation*}
c^*(s)=
\begin{cases}
\dps 2&\quad\text{if}\quad 0\le s\le 2,\\
\dps \sqrt{\frac{2}{s}}+\sqrt{\frac{s}{2}}&\quad\text{if}\quad s>2.
\end{cases}
\end{equation*}
Then it is easy to see that the minimal speed  $c^*(s)$ is linearly selected for $0<s\le2$, while it is nonlinearly selected for $s>2$. Note particularly that, for $s\in(1,2]$, the minimal speed $c^*(s)$ is still linearly selected
even though the KPP condition \eqref{kpp condition} is not satisfied. 
In addition, we see that the transition front from linear selection to nonlinear selection for \eqref{Hadeler-Rothe eq} occurs when $s=2$.

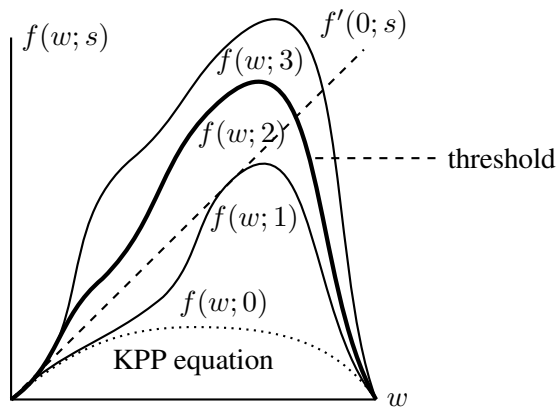
\begin{figure}
\begin{center}
\begin{tikzpicture}[scale = 0.8]
\draw[thick](-3,6)-- (-3,0)-- (3,0) node[right] {$w$};
\draw[dashed] [thick](-3,0)--(2.8,5.8) node[above] {$f'(0;s)$};
\node[right] at (-3,6) {$f(w;s)$};
\draw[dotted] [thick] (-3,0) to [out=40,in=180] (0,1.2) to [out=0,in=125] (3,0);
\node[below] at (0.5,2) {$f(w;0)$};
\node[below] at (0,1) {KPP equation};
\draw [thick] (-3,0) to [out=45, in=220] (-0.5,1.7) to [out=45,in=220] (0.7,3.7) to [out=40,in=130] (3,0);
\node[below] at (1,3.4) {$f(w;1)$};
\draw [ultra thick] (-3,0) to [out=35, in=220] (-1.5,2) to [out=45,in=220] (0.5,5) to [out=40,in=120] (3,0);
\node[below] at (0.8,4.8) {$f(w;2)$};
\draw [thick] (-3,0) to [out=35, in=220] (-1,4) to [out=40,in=220] (0.7,6) to [out=40,in=110] (3,0);
\node[below] at (1.1,6) {$f(w;3)$};
\draw[dashed] [thick] (2,4)--(4,4) node[right] {threshold};
\end{tikzpicture}
\caption{The transition from linear selection to nonlinear selection of \eqref{Hadeler-Rothe eq}.}
\end{center}
\end{figure}




In the remarkable work \cite{Lucia Muratov Novaga},
Lucia, Muratov, and Novaga proposed a variational approach to rigorously establish a mechanism to determine speed linear selection and nonlinear selection on the single monostable reaction-diffusion equations. Roughly speaking, the following two conditions are equivalent:
\begin{itemize}
\item[(i)] the minimal traveling wave speed of
$w_t=w_{xx}+f(w)$
is nonlinearly selected;
\item[(ii)] $\Phi_c[w]\leq 0$ holds for some $c>2\sqrt{f'(0)}$ and $w(\not\equiv 0)\in C_0^\infty(\mathbb{R})$, where
\beaa
\Phi_c[w]:=\int_\mathbb{R}e^{cx}\Big[\frac{1}{2}w_x^2-\int_0^wf(s)ds\Big]dx.
\eeaa
\end{itemize}
As an application, some explicit and easy-to-check results can be obtained to determine linear and nonlinear selection (see Section 5 in \cite{Lucia Muratov Novaga}). A related issue can be found in \cite{Ma Ou} using the theory of abstract monotone semiflow.

The first part of this paper is dedicated to the speed selection problem of the single monostable reaction-diffusion equations. 
We establish a new mechanism for determining the linear or nonlinear selection by considering a family of continuously varying nonlinearities.
By varying the parameter within the nonlinearity, we obtain a full understanding of how the decay rate of the minimal traveling wave
at infinity influences the propagation speed.
Unlike the mechanism established by Lucia et al., our approach provides insight into the process by which linear selection evolves into nonlinear selection.
The propagation phenomenon and inside dynamics
of the front for more general single equations have been widely discussed in the literature.
We may refer to, e.g., \cite{Berestycki Hamel2012, Fife McLeod1977, Gardner et al 2012, Liang Zhao 2007, Roques et al 2012, Rothe1981, Stokes1976} and references cited therein.

Furthermore,
as noted in \cite{van Saarloos2003}, many natural elements such as
advection, nonlocal diffusion, and periodicity need to be considered in the propagation problem.  The variational approach, 
as discussed in \cite{Lucia Muratov Novaga}, can 
treat homogeneous single equations with the standard Laplace diffusion, 
but it is not easy to handle the equation without a variational approach.
In contrast, our method can be applied to the equations and systems as long as the comparison principle holds.
In the second part of this paper, we extend our observation on the threshold behavior between linear selection and nonlinear selection to the single nonlocal diffusion equation.


\subsection{Main results on the reaction-diffusion equation}

\noindent

We first consider the following single equation
\beaa
w_t= w_{xx} +f(w;s),
\eeaa
where
$\{f(\cdot;s)\}\subset C^2$ is a one-parameter family of nonlinear functions satisfying monostable condition and varies continuously and monotonously on the parameter $s\in[0,\infty)$. 
The assumptions on $f$ are as follows:
\begin{itemize}
    \item[(A1)](monostable condition) $f(\cdot;s)\in C^2([0,1])$, $f(0;s)=f(1;s)=0$, $f'(0;s):=\gamma_0>0>f'(1;s)$,  and $f(w;s)>0$ for all $s\in\mathbb{R}^+$ and $w\in(0,1)$.
    \item[(A2)] (Lipschitz continuity) $f(\cdot;s)$, $f'(\cdot;s)$, and $f''(\cdot;s)$ are Lipschitz continuous on $s\in\mathbb{R}^+$
    uniformly in $w$. In other words,
    there exists $L_0>0$ such that
    \beaa
    |f^{(n)}(w;s_1)-f^{(n)}(w;s_2)|\leq L_0|s_1-s_2|\quad \mbox{for all $\ $ $w\in[0,1]$ and $n=0,1,2$,}
    \eeaa
    where $f^{(n)}$ mean the $n$th derivative of $f$ with respect to $w$ for $n\in\mathbb{N}$, {\it i.e.}, $f^{(0)}=f$, $f^{(1)}=f'$, and $f^{(2)}=f''$.
    \item[(A3)] (monotonicity condition) $f(w;\hat{s})>f(w;s)$ for all $w\in (0,1)$ if $\hat{s}>s$, and $f''(0;\hat{s})> f''(0;s)$ if $\hat{s}>s$.
\end{itemize}

\begin{remark}
Without loss of generality, we assume $\gamma_0=1$ in the assumption (A1) for the part concerned with the single reaction-diffusion equation, such that the linearly selected spreading speed is equal to $2$. 
\end{remark}

\begin{remark}
Note that, in this paper, we always assume $\{f(\cdot;s)\}\subset C^2$ as that in the assumption (A1) for the simplicity of the proof. As a matter of fact,
our approach still works for weaker regularity of $f$, say
$\{f(\cdot;s)\}\subset C^{1,\alpha}$ for some $\alpha\in(0,1)$.
If we consider a higher degree of regularity for $f$, such as ${f(\cdot;s)}\subset C^k$ for some $k>2$, then the condition in the assumption  (A3) for $f''(0;\cdot)$ will be replaced by $f^{(i)}(0;\cdot)$ for some $1<i\le k$.
\end{remark}


Thanks to the assumption  (A1), there exists the minimal traveling wave speed for all $s\in[0,\infty)$, denoted by $c^*(s)$, such that
the system
\begin{equation}\label{scalar tw-parameter s}
\left\{
\begin{aligned}
&W''+cW'+f(W;s)=0,\quad  \xi\in\mathbb{R},\\
&W(-\infty)=1,\ W(+\infty)=0,\\
&W'<0,\quad \xi\in\mathbb{R},
\end{aligned}
\right.
\end{equation}
admits a unique (up to translations) solution $(c,W)$ if and only if  $c\geq c^*(s)$, where $c^*(s)$ is the spreading speed defined as \eqref{def:spreading speed}. 
In the literature, the minimal traveling wave $(c^*,W)$ is classified into two types: {\em pulled front} and
{\em pushed front} \cite{Rothe1981,Stokes1976,van Saarloos2003}.
\begin{itemize}
    \item The minimal traveling wave $W(\xi)$ with the speed $c^*$ is called a {\em pulled front} if $c^*=2\sqrt{f'(0)}$.
In this case, the front is pulled by the leading edge with speed determined by its linearization at the unstable state $w=0$. Therefore, the minimal traveling wave speed $c^*$ is also said to be linearly selected. 
\item On the other hand, if $c^*>2\sqrt{f'(0)}$,
the minimal traveling wave $W(\xi)$ with a speed $c^*$ is called a {\em pushed front} since
the propagation speed is determined by the whole wave, not only by the behavior of the leading edge. Thus
the minimal traveling wave speed $c^*$ is also said to be nonlinearly selected.
\end{itemize}

We further assume that linear (resp., nonlinear) selection mechanism can occur at some $s$.
More precisely, $f(\cdot;s)$ satisfies 
\begin{itemize}
    \item[(A4)] there exists $s_1>0$ such that $f(w;s_1)$ satisfies KPP condition \eqref{kpp condition},
   and thus $c^*(s_1)=2$.
    \item[(A5)]  there exists  
     $s_2>s_1$ such that $c^*(s_2)>2$.
\end{itemize}

\begin{remark}\label{rk:a4+a5}
In view of the assumption (A3), a simple comparison yields that $c^*(\hat{s})\geq c^*(s)$ if $\hat{s}\geq s$.
Together with assumptions (A4), (A5) and the fact $c^*(s)\geq 2$ for all $s\geq0$, we see that:
\begin{itemize}
\item[(1)] $c^*(s)=2$ for all $0\leq s\leq s_1$;
\item[(2)] $c^*(s)>2$ for all $s\geq s_2$.
\end{itemize}
\end{remark}

\begin{remark}
It is easy to check that \eqref{Hadeler-Rothe eq} satisfies assumptions  (A1)-(A5).
\end{remark}

Our first main result describes how the speed linear selection
evolves to the speed nonlinear selection in terms of the varying parameter $s$.
The key point is to completely characterize the evolution of the decay rate of the minimal traveling wave $W_s(\xi)$ with respect to $s$.
It is well known (\cite{Aronson Weinberger}) that if $c^*(s)=2$, then
\bea\label{decay-W-linear}
W_s(\xi)=A\xi e^{-\xi}+Be^{-\xi}+o(e^{-\xi})\quad \mbox{as $\xi\to+\infty$},
\eea
where 
$A\geq0$ and $B\in \mathbb{R}$,
and $B>0$ if $A=0$. 
As we will see, the key point to understanding the speed selection problem is to determine the leading order of the decay rate of $W_s(\xi)$, {\it i.e.},
whether $A>0$ or $A=0$ in \eqref{decay-W-linear}.

\begin{theorem}\label{th: threshold scalar equation}
Assume that assumptions (A1)-(A5) hold. Then there exists the threshold value $s^*\in[s_1,s_2)$ such that the minimal traveling wave speed of \eqref{scalar tw-parameter s} satisfies
\begin{equation}\label{def of threshold scalar}
c^*(s)=2\quad\text{for all}\ s\in[0,s^*];\quad c^*(s)>2\quad\text{for all}\ s\in(s^*,\infty).
\end{equation}
Moreover,
the minimal traveling wave $W_s(\xi)$ satisfies 
\begin{equation}\label{asy tw threshold scalar}
W_s(\xi)=Be^{-\xi}+o(e^{-\xi})\quad \mbox{as}\quad \xi\to+\infty\quad\text{for some}\quad B>0,
\end{equation}
if and only if $s=s^*$.
\end{theorem}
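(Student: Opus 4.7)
The plan is to first define the threshold, establish \eqref{def of threshold scalar} by a limit-of-waves argument, then show pure exponential decay at $s^*$ via a pushed-front limit, and finally rule out pure exponential decay at any other $s$ using a second-order expansion of the wave ODE and the sign information in (A3).

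By (A3) and the comparison principle (Remark 1.5), $s \mapsto c^*(s)$ is nondecreasing. Define
\begin{equation*}
s^* := \sup\{s \geq 0 : c^*(s) = 2\},
\end{equation*}
so (A4) and (A5) place $s^* \in [s_1, s_2]$. To prove $c^*(s^*) = 2$ and thereby $s^* < s_2$, I take $s_n \uparrow s^*$ with $c^*(s_n) = 2$, normalize $W_{s_n}(0) = 1/2$, and extract a $C^2_{\mathrm{loc}}$ subsequential limit $W^\infty$ via elliptic regularity and the uniform bounds $W_{s_n} \in [0,1]$, $W_{s_n}' \leq 0$. This $W^\infty$ satisfies the wave ODE at speed $2$ with nonlinearity $f(\cdot; s^*)$ (using (A2)), is monotone with $W^\infty(\pm\infty) \in \{0,1\}$, and is therefore a traveling wave; hence $c^*(s^*) \leq 2$, which combined with the universal bound $c^*(s^*) \geq 2$ yields equality. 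Monotonicity then gives \eqref{def of threshold scalar}.

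For the forward direction of \eqref{asy tw threshold scalar}, take $s_n \downarrow s^*$ with $c_n := c^*(s_n) > 2$; each $W_{s_n}$ is a pushed front whose decay at $+\infty$ is the strong-stable mode $B_n e^{-\lambda_+(c_n)\xi}$ with $\lambda_+(c_n) = (c_n + \sqrt{c_n^2 - 4})/2 > 1$. Monotonicity gives $c_n \downarrow c_\infty \geq 2$; I plan to show $c_\infty = 2$ by constructing, for each $\varepsilon > 0$ and sufficiently small $\delta > 0$, a supersolution for $f(\cdot; s^*+\delta)$ at speed $2+\varepsilon$ by perturbing a wave of speed $2+\varepsilon/2$ for $f(\cdot; s^*)$ with the perturbation quantified by the Lipschitz control (A2), giving $c^*(s^*+\delta) \leq 2+\varepsilon$. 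Then $\lambda_+(c_n) \to 1$, and the $C^2_{\mathrm{loc}}$ limit $W^\infty$ of $W_{s_n}$ is a wave of speed $2$ for $f(\cdot; s^*)$ inheriting pure exponential decay $Be^{-\xi}$ with $B > 0$. Uniqueness of the minimal wave up to translation then identifies $W^\infty$ with $W_{s^*}$.

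For the reverse direction, the case $s > s^*$ is immediate: $c^*(s) > 2$ forces decay $\sim e^{-\lambda_+(c^*(s))\xi}$ with $\lambda_+ > 1$, i.e., $o(e^{-\xi})$. For $s \in [0, s^*)$ with $c^*(s) = 2$, suppose for contradiction $W_s(\xi) = B_s e^{-\xi} + o(e^{-\xi})$. Expanding the wave ODE near $W = 0$ via $f(W;s) = W + \tfrac{1}{2} f''(0;s) W^2 + O(W^3)$ and solving order by order yields
\begin{equation*}
W_s(\xi) = B_s e^{-\xi} - \tfrac{1}{2} f''(0;s) B_s^2\, e^{-2\xi} + O(e^{-3\xi}),
\end{equation*}
and analogously for $W_{s^*}$. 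Setting $\tau^* := \log(B_s/B_{s^*})$ to synchronize leading orders,
\begin{equation*}
W_{s^*}(\xi) - W_s(\xi + \tau^*) \sim \tfrac{1}{2}\bigl[f''(0;s) - f''(0;s^*)\bigr] B_{s^*}^2\, e^{-2\xi}\quad (\xi \to +\infty),
\end{equation*}
which is \emph{strictly negative} by the $f''(0;\cdot)$ monotonicity in (A3). Since $f(\cdot; s) < f(\cdot; s^*)$, the translate $W_s(\cdot + \tau^*)$ is a strict subsolution of the speed-$2$ wave equation for $f(\cdot; s^*)$, and a sliding argument in $\tau$ combined with the strong maximum principle forces $W_{s^*} \geq W_s(\cdot + \tau^*)$ pointwise, contradicting the negative asymptotic above. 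The hardest part is making this last sliding-plus-expansion step rigorous: the behavior at $-\infty$ near $W = 1$ involves rates $\mu(s) = -1 + \sqrt{1 - f'(1;s)}$ that are not directly comparable under (A1)--(A5), so a truncation or localized comparison near $+\infty$ will be needed to apply the strong maximum principle effectively.
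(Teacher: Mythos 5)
Your threshold definition and Lemma 2.1--level argument (monotonicity and continuity of $c^*$, compactness of waves) match the paper. Your reverse direction is essentially sound and in fact a streamlined version of the paper's: the second-order expansion
$W_s(\xi)=B_s e^{-\xi}-\tfrac12 f''(0;s)B_s^2 e^{-2\xi}+o(e^{-2\xi})$
can be justified from $f\in C^2$ and $W_s-B_se^{-\xi}=o(e^{-\xi})$ by variation of parameters (the $o(e^{-2\xi})$ source feeds back an $o(e^{-2\xi})$ response since $-2$ is not a characteristic root and the $e^{-\xi},\xi e^{-\xi}$ modes are excluded), and combined with the sliding argument this yields the sign contradiction. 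The paper instead passes through the ancillary claim $J_2=o(\widehat W)$ and a Coddington--Levinson asymptotic lemma, which is more robust but heavier; your route is a legitimate alternative.

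However, your forward direction ($s=s^*\Rightarrow$ pure exponential decay) has a genuine gap, and it is precisely the step the paper works hardest to handle. You take $s_n\downarrow s^*$ with $c_n=c^*(s_n)>2$, note each $W_{s_n}$ decays like $B_n e^{-\lambda^+(c_n)\xi}$, extract a $C^2_{\mathrm{loc}}$ limit $W^\infty$, and assert it ``inherits pure exponential decay $Be^{-\xi}$ with $B>0$.'' This inference is false in general: local uniform convergence on compacta carries no information about the tail of the limit. For a concrete counterexample to the mechanism, the functions $u_n(\xi):=n\bigl(e^{-\xi}-e^{-(1+1/n)\xi}\bigr)$ each decay purely exponentially at rate $1$ at $+\infty$, yet $u_n\to \xi e^{-\xi}$ in $C^2_{\mathrm{loc}}$. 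In your setting, as $c_n\to 2$ the two roots $\lambda^{\pm}(c_n)$ coalesce at $1$, which is exactly the degenerate regime where such ``loss of pure decay'' can occur; nothing in the compactness argument excludes $W^\infty(\xi)\sim A\xi e^{-\xi}$ with $A>0$. Monitoring $z_n:=-W_{s_n}'/W_{s_n}$ does not help either, since $z_n(+\infty)=\lambda^+(c_n)\to 1$ and $z(+\infty)=1$ holds for both $e^{-\xi}$ and $\xi e^{-\xi}$ decay, so the limiting logarithmic derivative cannot distinguish the two cases.

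The paper's forward direction is entirely different: it argues by contradiction, assuming $W_{s^*}(\xi)\sim A_0\xi e^{-\xi}$, and constructs a piecewise supersolution $\bar W=\min\{W_{s^*}-R_w,1\}$ of the wave operator at speed $2$ for the perturbed nonlinearity $f(\cdot;s^*+\delta_0)$, where the key piece $R_w=\varepsilon_1\sigma(\xi)e^{-\xi}$ with $\sigma(\xi)=4e^{-(\xi-\xi_1)/2}-4+4(\xi-\xi_1)$ exploits the $\xi e^{-\xi}$ leading order of $W_{s^*}$ to absorb the $O(\delta_0)$ perturbation (this construction would collapse if $W_{s^*}$ had pure $e^{-\xi}$ decay). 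Since $c^*(s^*+\delta_0)>2$, the spreading speed of the Cauchy problem for $f(\cdot;s^*+\delta_0)$ with compactly supported data dominated by $\bar W$ is strictly greater than $2$, contradicting the comparison with the speed-$2$ supersolution $\bar W(x-2t)$. You would need to either adopt this supersolution construction or supply an independent argument controlling the tails of $W_{s_n}$ uniformly in $n$; as written, the forward implication is unproven.
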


\begin{remark}
\begin{itemize}
\item[(1)]
Note that \eqref{asy tw threshold scalar} in Theorem~\ref{th: threshold scalar equation} indicates that, as $\xi\to+\infty$, the leading order of the decay rate of $W_s(\xi)$ switches from $\xi e^{-\xi}$ to $e^{-\xi}$ as $s\to s^*$ from below.
\item[(2)] In our proof of \eqref{def of threshold scalar} and the sufficient condition for \eqref{asy tw threshold scalar},
the condition in the assumption  (A3) that $f''(0;\hat{s})> f''(0;s)$ for $\hat{s}>s$ is not required.
\end{itemize}
\end{remark}

\medskip
The classification of traveling wave fronts for \eqref{scalar tw-parameter s} is well-known. We summarize the results as follows:
\begin{proposition}\label{prop: classification scalar}
Assume $f(\cdot)$ satisfies the monostable condition. The traveling wave fronts $(c,W)$, defined as \eqref{scalar tw-parameter s}, satisfies
\begin{itemize}
\item[(1)] there exists $(A,B)\in \mathbb{R}^+\times\mathbb{R}$ or $A=0,B>0$  such that $W(\xi)=A\xi e^{-\xi}+B e^{-\xi}+o(e^{-\xi})$ as $\xi\to+\infty$, 
if and only if $c=c^*=2$;
\item[(2)] there exists $A>0$ such that $W(\xi)=Ae^{-\lambda^+(c)\xi}+o(e^{-\lambda^+(c)\xi})$ as $\xi\to+\infty$, 
if and only if $c=c^*>2$;
\item[(3)] there exists $A>0$ such that $W(\xi)=Ae^{-\lambda^-(c)\xi}+o(e^{-\lambda^-(c)\xi})$ as $\xi\to+\infty$, 
if and only if $c>c^*$.
\end{itemize}
Here, $\lambda^{\pm}(c)$ are defined as
\bea\label{lambda + - scalar}
\lambda^{\pm}(c):=\frac{c\pm\sqrt{c^2-4}}{2}>0.
\eea
\end{proposition}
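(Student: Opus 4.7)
The plan is to analyze the ODE $W''+cW'+f(W)=0$ near the equilibrium $W=0$. Linearizing and using $f'(0)=1$, the characteristic equation $\lambda^2-c\lambda+1=0$ has roots $\lambda^\pm(c)$ as in \eqref{lambda + - scalar} for $c\ge 2$, coinciding at $\lambda=1$ when $c=2$. A phase-plane argument on $(W,W')$ first excludes $c<2$: in that range the origin is a stable spiral and any orbit entering it oscillates, contradicting the monotonicity $W'<0$. Hence every wave satisfies $c\ge 2$, and the only admissible exponential decay rates at $+\infty$ are $\lambda^-(c)$ and $\lambda^+(c)$.

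The second step pins down the precise asymptotic form at $+\infty$ via a perturbation of the linearization using $f(W)=W+O(W^2)$. For $c>2$, the two distinct characteristic exponents together with Levinson-type asymptotic theory (equivalently, the strong- and weak-stable manifold structure at $(0,0)$) force any positive monotone decaying solution to satisfy \emph{exactly} one of the two forms $W(\xi)=Ae^{-\lambda^-(c)\xi}(1+o(1))$ with $A>0$, or $W(\xi)=Be^{-\lambda^+(c)\xi}(1+o(1))$ with $B>0$. For $c=2$, the repeated root yields the degenerate-node form $(A\xi+B)e^{-\xi}(1+o(1))$, where positivity $W>0$ together with $W'<0$ at large $\xi$ forces $A\ge 0$, and $B>0$ whenever $A=0$. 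These three asymptotic forms are mutually exclusive and exhaustive, so it suffices to prove one direction of each biconditional in (1)-(3); the converses follow by exhaustion.

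The last step assigns each speed regime its asymptotic form. For $c>c^*$ the slow-rate form (3) is obtained by checking that the minimal wave at $c$ connects to the origin along the weak-stable direction, using comparison with the linear supersolution $\min\{1,Me^{-\lambda^-(c)\xi}\}$ to exclude the fast rate. For $c=2=c^*$ the form (1) is the classical Aronson-Weinberger asymptotic recalled in \eqref{decay-W-linear}. The main obstacle is case (2): proving that the unique minimal front at $c=c^*>2$ (the pushed regime) decays at the fast rate $\lambda^+(c^*)$ rather than at the slow rate $\lambda^-(c^*)$. This is the standard characterization of a pushed front, and is handled by a sliding/comparison argument exploiting the uniqueness up to translation of the minimal wave: if the minimal pushed wave carried any $e^{-\lambda^-(c^*)\xi}$ component, then by shifting it could be placed below traveling waves at strictly smaller speeds, violating the minimality of $c^*$. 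Combining the three forward directions with the disjointness and exhaustivity of the asymptotic trichotomy then completes the biconditionals.
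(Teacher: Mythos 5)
The paper states Proposition~\ref{prop: classification scalar} as a ``well-known'' classification and gives no proof of its own, so there is no paper argument to compare you against directly; the result is simply cited from the classical ODE and phase-plane literature (Aronson--Weinberger, Rothe, Stokes). Your outline reproduces the standard skeleton from that literature: rule out $c<2$ via the spiral--node dichotomy, obtain the admissible decay templates $(A\xi+B)e^{-\xi}$ and $Ae^{-\lambda^{\pm}(c)\xi}$ from linearization plus a Levinson-type asymptotic theorem, and then assign each speed regime its template, noting the three templates are mutually exclusive so that the converse directions follow by exhaustion. This structure is sound.

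Two of the forward implications, however, are not actually closed by what you wrote. For case (3), the supersolution $\min\{1,Me^{-\lambda^-(c)\xi}\}$ only forces $W$ to decay \emph{at least} as fast as $e^{-\lambda^-(c)\xi}$; this is perfectly compatible with the fast rate $\lambda^+(c)$ and therefore cannot exclude it. What you need is a lower bound (a subsolution carrying the slow rate), or the phase-plane observation that for $c>c^*$ the connecting orbit is transverse to the one-dimensional strong stable manifold at the origin and hence enters along the weak stable direction. For case (2), the sentence ``by shifting it could be placed below traveling waves at strictly smaller speeds'' is not a well-formed comparison: for $c<c^*$ no traveling front exists, so there is nothing to slide against. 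The genuine argument is either a shooting/continuity argument in the $(W,W')$ plane showing that $c^*$ is precisely the first speed at which the unstable manifold of $(1,0)$ meets the strong stable manifold of $(0,0)$, or an explicit construction that converts a slow-decaying profile at $c^*$ into a sub/supersolution pair at some $\tilde c<c^*$, yielding a front at speed $\tilde c$ and contradicting minimality. As written, case (2) is a gap, and case (3) uses the comparison in the wrong direction.
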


\begin{remark}
Combining \eqref{decay-W-linear}, Theorem \ref{th: threshold scalar equation}, and Proposition \ref{prop: classification scalar},
we can fully understand how the decay rates of the minimal traveling wave depend on $s$, which is formulated as follows:
\begin{itemize}
    \item[(1)] Pulled front: if $s\in[0,s^*)$, then $W_s(\xi)=A\xi e^{-\xi}+Be^{-\xi}+o(e^{-\xi})$ as $\xi\to+\infty$ with $A>0$ and $B\in \mathbb{R}$;
    \item[(2)] Pulled-to-pushed transition front: if $s=s^*$, then $W_s(\xi)=Be^{-\xi}+o(e^{-\xi})$ as $\xi\to+\infty$ with $B>0$;
    \item[(3)] Pushed front: if $s\in(s^*,\infty)$, then $W_s(\xi)=Ae^{-\lambda_s^+\xi}+o(e^{-\lambda_s^+\xi})$ as $\xi\to+\infty$ with $A>0$.
\end{itemize}
Here $\lambda_{s}^+$ is defined as \eqref{lambda + - scalar} with speed $c=c^*(s)$.

\end{remark}

\subsection{Main results on the nonlocal equation}\label{sec: intro nonlocal}

\noindent

Next, we  consider the following single nonlocal diffusion equation
\beaa
w_t= J\ast w-w +f(w;q),
\eeaa
where
$\{f(\cdot;q)\}\subset C^2$ is a one-parameter family of nonlinear functions satisfying assumptions (A1)-(A3) defined in \S 1.1 with $s=q$, 
$J$ is a nonnegative dispersal kernel defined on $\mathbb{R}$, and $J\ast w$ is defined as
$$J\ast w(x):=\int_{\mathbb{R}}J(x-y)w(y)dy.$$ 
For the simplicity of our discussion, throughout this paper, we always assume that the dispersal kernel
\begin{equation}\label{assumption on J}
J\ \text{is compactly supported, symmetric, and}\ \int_{\mathbb{R}}J=1.
\end{equation}


Under the assumption (A1), it has been proved in \cite{Coville} that there exists the minimal traveling wave speed for all $q\in[0,\infty)$, denoted by $c_{NL}^*(q)$, such that
the system
\begin{equation}\label{scalar nonlocal tw-parameter s}
\left\{
\begin{aligned}
&J\ast \mathcal{W}+c\mathcal{W}'+f(\mathcal{W};q)-\mathcal{W}=0,\quad  \xi\in\mathbb{R},\\
&\mathcal{W}(-\infty)=1,\ \mathcal{W}(+\infty)=0,\\
&\mathcal{W}'<0,\quad \xi\in\mathbb{R},
\end{aligned}
\right.
\end{equation}
admits a unique (up to translations) solution $(c,\mathcal{W})$ if and only if
$c\geq c_{NL}^*(q)$. 
Moreover, there is a lower bound estimate for the minimal speed $c_{NL}^*(q)\geq c_0^*$,
where the critical speed $c_0^*$ is given by the following variational formula
\begin{equation}\label{formula of c_NL}
c_{0}^*:=\min_{\lambda>0}\frac{1}{\lambda}\Big(\int_{\mathbb{R}}J(x)e^{\lambda x}dx+f'(0;q)-1\Big),
\end{equation}
which derived from the linearization of \eqref{scalar nonlocal tw-parameter s} at the trivial state $\mathcal{W}= 0$.
If $f(\cdot;q)$ also satisfies the KPP condition \eqref{kpp condition}, then
$c_{NL}^*(q)= c_0^*$. Therefore, we call the case $c_{NL}^*(q)= c_0^*$ as the speed linear selection and the case $c_{NL}^*(q)> c_0^*$ as the speed nonlinear selection.

\begin{remark}\label{rm:lambda_0}
Let $h(\lambda)$ be defined by
$$h(\lambda):=\int_{\mathbb{R}}J(z)e^{\lambda z}dz-1+f'(0;q).$$
It is easy to check that $\lambda\to h(\lambda)$ is an increasing, strictly convex, and sublinear function satisfying $h(0)=f'(0;q)>0$.
Therefore, there exist only one $\lambda_0>0$ satisfying $h(\lambda_0)=c_0^*\lambda_0$, and for $c>c_0^*$, the equation $h(\lambda)=c\lambda$ admits two different positive roots $\lambda_q^-(c)$ and $\lambda_q^+(c)$ satisfying $0<\lambda_q^-(c)<\lambda_0<\lambda_q^+(c)$. 
\end{remark}

We further assume that a linear (resp., nonlinear) selection mechanism can occur at some $q$.
More precisely, $f(\cdot;q)$ satisfies 
\begin{itemize}
    \item[(A6)] there exists $q_1>0$ such that $f(w;q_1)$ satisfies KPP condition \eqref{kpp condition},
   and thus $c_{NL}^*(q_1)=c^*_0$.
    \item[(A7)]  there exists  
     $q_2>q_1$ such that $c_{NL}^*(q_2)>c^*_0$.
\end{itemize}

\begin{remark}\label{rk:a6+a7}
In view of the assumption  (A3), a simple comparison yields that $c_{NL}^*(\hat{q})\geq c_{NL}^*(q)$ if $\hat{q}\geq q$.
Together with assumptions (A6), (A7) and the fact 
$c_{NL}^*(q)\geq c^*_0\ \ \text{for all}\ \ q\geq0,$
we see that
$$c_{NL}^*(q)=c^*_0\ \ \text{for all}\ \ 0\leq q\leq q_1\ \ \text{and}\ \ c_{NL}^*(q)>c^*_0\ \ \text{for all}\ \ q\geq q_2.$$
\end{remark}

It has been proved in \cite{Carr Chmaj} by Ikehara's Theorem that, if $f(w;q)$ satisfies the KPP condition \eqref{kpp condition}, then
\bea\label{decay-U-linear-1}
\mathcal{W}_q(\xi)=A\xi e^{-\lambda_0\xi}+Be^{-\lambda_0\xi}+o(e^{-\lambda_0\xi})\quad \mbox{as $\xi\to+\infty$},
\eea
where $A>0$ and $B\in \mathbb{R}$.
This asymptotic estimate 
has been extended to the general monostable case ({\it i.e.}, the assumption (A1)) 
with $A\geq0$ and $B\in \mathbb{R}$,
and $B>0$ if $A=0$.
We remark that \eqref{decay-U-linear-1}  has been discussed in
\cite{Coville}; however, the proof provided in \cite[Theorem 1.6]{Coville} contains a gap such that they deduced that $A>0$ always holds in \eqref{decay-U-linear-1}.
We will fix the gap in Proposition~\ref{prop:correction-U-linear-decay} below.


The first result is concerned with how the {\em pulled front} 
evolves to the pulled-to-pushed transition front in terms of the varying parameter $q$. Similar to Theorem \ref{th: threshold scalar equation}, the key point is to completely characterize the evolution of the decay rate of the minimal traveling wave $\mathcal{W}_q(\xi)$ with respect to $q$.

\begin{theorem}\label{th: threshold scalar equation nonlocal}
Assume that assumptions (A1)-(A3) and (A6)-(A7) hold. Then there exists the threshold value $q^*\in[q_1,q_2)$ such that the minimal traveling wave speed of \eqref{scalar nonlocal tw-parameter s} satisfies
\begin{equation}\label{def of threshold scalar nonlocal}
c_{NL}^*(q)=c_0^*\quad\text{for all}\ q\in[0,q^*];\quad c_{NL}^*(q)>c_0^*\quad\text{for all}\ q\in(q^*,\infty).
\end{equation}
Moreover,
the minimal traveling wave $U_s(\xi)$ satisfies 
\begin{equation}\label{asy tw threshold scalar nonlocal}
\mathcal{W}_q(\xi)=Be^{-\lambda_0\xi}+o(e^{-\lambda_0\xi})\quad \mbox{as}\quad \xi\to+\infty\quad\text{for some}\quad B>0,
\end{equation}
if and only if $q=q^*$.
\end{theorem}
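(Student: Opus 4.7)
The plan is to adapt the proof of Theorem~\ref{th: threshold scalar equation} to the nonlocal setting, using the Ikehara-type asymptotics for nonlocal travelling waves (cf.\ \cite{Carr Chmaj} and its corrected statement in Proposition~\ref{prop:correction-U-linear-decay}) in place of the local ODE tail analysis, and relying throughout on the nonlocal comparison principle. First I would define
\[
q^* := \sup\{q \ge 0 \,:\, c_{NL}^*(q) = c_0^*\},
\]
which by (A6), (A7) and the monotonicity $\hat q \ge q \Rightarrow c_{NL}^*(\hat q) \ge c_{NL}^*(q)$ (Remark~\ref{rk:a6+a7}) lies in $[q_1, q_2)$ and satisfies $c_{NL}^*(q) > c_0^*$ for every $q > q^*$. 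To close \eqref{def of threshold scalar nonlocal} it remains to verify $c_{NL}^*(q^*) = c_0^*$. I would do this by approximation: choose $q_n \uparrow q^*$ with $c_{NL}^*(q_n) = c_0^*$, normalize the minimal profiles by $\mathcal{W}_n(0) = 1/2$, note that the compact support of $J$ together with \eqref{scalar nonlocal tw-parameter s} yields uniform $C^1$ bounds, and extract a locally uniform monotone subsequential limit; dominated convergence on the convolution term then shows the limit solves \eqref{scalar nonlocal tw-parameter s} at parameter $q^*$ with speed $c_0^*$, while the normalization together with (A1) and monotonicity of the limit force $\mathcal{W}_\infty(-\infty)=1$ and $\mathcal{W}_\infty(+\infty)=0$.

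For the decay characterization, sufficiency (that $q=q^*$ implies a purely exponential tail) is obtained by approaching $q^*$ from the right: for $q_n \downarrow q^*$ one has $c_{NL}^*(q_n) > c_0^*$, and the nonlocal analogue of Proposition~\ref{prop: classification scalar}(2) (via Ikehara's theorem in \cite{Carr Chmaj}) gives $\mathcal{W}_{q_n}(\xi) = A_n e^{-\lambda_{q_n}^+\xi}(1 + o(1))$ with $\lambda_{q_n}^+ \to \lambda_0$ as $c_{NL}^*(q_n) \downarrow c_0^*$ (Remark~\ref{rm:lambda_0}). The comparison principle, applied against a pure exponential supersolution at rate just below $\lambda_0$, yields a tail bound $\mathcal{W}_{q_n}(\xi) \le C e^{-(\lambda_0-\varepsilon)\xi}$ on a half-line $[\xi_0,\infty)$ uniformly in $n$; passing to the limit and invoking Proposition~\ref{prop:correction-U-linear-decay} rules out the $\xi e^{-\lambda_0\xi}$ prefactor in the expansion of $\mathcal{W}_{q^*}$, establishing \eqref{asy tw threshold scalar nonlocal}. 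For the necessity, the case $q > q^*$ is immediate because the decay rate $\lambda_q^+ > \lambda_0$ disagrees with \eqref{asy tw threshold scalar nonlocal}; if instead $q < q^*$, then $c_{NL}^*(q) = c_0^*$ and Proposition~\ref{prop:correction-U-linear-decay} gives $\mathcal{W}_q(\xi) = A \xi e^{-\lambda_0\xi} + B e^{-\lambda_0\xi} + o(e^{-\lambda_0\xi})$ with $A \ge 0$, so the claim reduces to excluding $A=0$.

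The hard part will be precisely this last step. The natural strategy is a contradiction argument: if $A=0$ for some $q < q^*$, then $\mathcal{W}_q$ already exhibits a pure exponential tail, and exploiting (A3) -- in particular the strict monotonicity of $f''(0;\cdot)$ -- one should be able to perturb the parameter upwards and use $\mathcal{W}_q$ to build a subsolution at some $q' \in (q^*, q^*+\delta)$ with speed $c_0^*$, which together with a suitable supersolution produces a travelling wave at $q'$ of speed $c_0^*$, contradicting $c_{NL}^*(q') > c_0^*$. Unlike the local case, where the tail is governed by a second-order ODE amenable to explicit perturbative analysis, here one must quantify how $J\ast$ acts on $e^{-\lambda\xi}$ and $\xi e^{-\lambda\xi}$ near the double root $\lambda_0$ of $h(\lambda) = c_0^*\lambda$ and show that the strict gap in $f''(0;\cdot)$ supplied by (A3) suffices to absorb the resulting error. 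Making this sub/supersolution construction work in the nonlocal setting, with only $C^1$-in-$\xi$ regularity at one's disposal, is the principal technical obstruction; I would carry it out by combining the Laplace-transform/Ikehara analysis of \cite{Carr Chmaj} with quantitative comparison estimates against the family of pushed-front profiles $\{\mathcal{W}_{q_n}\}_{q_n \downarrow q^*}$ obtained in the sufficiency step.
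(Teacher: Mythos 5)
The central gap is in your sufficiency argument ($q=q^*\Rightarrow$ purely exponential tail). You approach $q^*$ from the right through pushed profiles $\mathcal{W}_{q_n}$ with $c_{NL}^*(q_n)>c_0^*$ and propose a uniform tail bound $\mathcal{W}_{q_n}(\xi)\le Ce^{-(\lambda_0-\varepsilon)\xi}$, then pass to the limit to ``rule out the $\xi e^{-\lambda_0\xi}$ prefactor.'' This cannot work: both $\xi e^{-\lambda_0\xi}$ and $e^{-\lambda_0\xi}$ are $o(e^{-(\lambda_0-\varepsilon)\xi})$ for any $\varepsilon>0$, so the limiting bound $\mathcal{W}_{q^*}(\xi)\le Ce^{-(\lambda_0-\varepsilon)\xi}$ is perfectly compatible with a nonzero $\xi e^{-\lambda_0\xi}$ leading term and distinguishes nothing. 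The bound one would actually need is $\mathcal{W}_{q_n}(\xi)\le Ce^{-\lambda_0\xi}$ uniformly in $n$, and there is no reason for this to hold: as the decay exponent of $\mathcal{W}_{q_n}$ tends to $\lambda_0$, the corresponding amplitude can blow up precisely so as to generate the $\xi e^{-\lambda_0\xi}$ profile in the limit (compare $\frac{e^{-\lambda_0\xi}-e^{-\lambda\xi}}{\lambda-\lambda_0}\to\xi e^{-\lambda_0\xi}$ as $\lambda\to\lambda_0$). The paper proves sufficiency by the \emph{opposite} contradiction: assuming $\mathcal{W}_{q^*}$ has the $\xi e^{-\lambda_0\xi}$ tail, it constructs the explicit four-piece correction $\mathcal{R}_w$ of \eqref{definition of Rw nonlocal} (with the cleverly chosen $\sigma(\xi)=\tfrac{1}{\lambda_0^2}e^{-\lambda_0(\xi-\xi_1)/2}-\tfrac{1}{\lambda_0^2}+\tfrac{1}{\lambda_0}(\xi-\xi_1)$ in the tail region) so that $\bar{\mathcal{W}}=\min\{\mathcal{W}_{q^*}-\mathcal{R}_w,1\}$ is a supersolution of $\mathcal{N}_0[\cdot]=0$ at parameter $q^*+\delta_0$ and speed $c_0^*$; the $\xi$-factor supplies exactly the slack needed to absorb the larger nonlinearity, and the comparison principle then caps the spreading speed at $c_0^*$, contradicting $c_{NL}^*(q^*+\delta_0)>c_0^*$. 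Your sketch misses this mechanism entirely, and it is the main technical content of Section 3.

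For the necessity with $q<q^*$, you correctly reduce to excluding $A=0$, but the route you sketch (use $\mathcal{W}_q$ as a subsolution at some $q'>q^*$ at speed $c_0^*$ and pair it with an unspecified supersolution) has the same structural problem in disguise: at the double root $\lambda_0$ a pure exponential $e^{-\lambda_0\xi}$ is not a supersolution, so producing the needed upper barrier again requires a $\xi e^{-\lambda_0\xi}$-type correction, which you do not construct. The paper instead runs a sliding argument: translate $\mathcal{W}_{q^*}$ to be tangent at infinity to $\mathcal{W}_{q_0}$, deduce $B_0=Be^{L^*}$, then analyze the difference $\widehat{\mathcal{W}}=\mathcal{W}_{q^*}(\cdot-L^*)-\mathcal{W}_{q_0}$; the strict gap $f''(0;q^*)>f''(0;q_0)$ from (A3), combined with an oscillation/monotonicity dichotomy for $\alpha(\xi)=e^{2\lambda_0\xi}\widehat{\mathcal{W}}(\xi)$, shows the forcing term is $o(\widehat{\mathcal{W}})$, and the Ikehara-type argument of Proposition~\ref{prop:correction-U-linear-decay} then forces a nonzero leading coefficient for $\widehat{\mathcal{W}}$, contradicting $B_0=Be^{L^*}$. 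Your identification of (A3) as the crucial ingredient is right, and your $q>q^*$ case and the compactness argument for $c_{NL}^*(q^*)=c_0^*$ are fine, but the two hard steps as written would not close.
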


\begin{remark}
Note that, \eqref{asy tw threshold scalar nonlocal} in Theorem~\ref{th: threshold scalar equation nonlocal} indicates that, as $\xi\to+\infty$, the leading order of the decay rate of $\mathcal{W}_q(\xi)$ switches from $\xi e^{-\lambda_0\xi}$ to $e^{-\lambda_0\xi}$ as $q\to q^*$ from below.
\end{remark}

\begin{remark}
In our proof of \eqref{def of threshold scalar nonlocal} and the sufficient condition for \eqref{asy tw threshold scalar nonlocal},
the condition in the assumption  (A3) that $f''(0;\hat{q})> f''(0;q)$ for $\hat{q}>q$ is not required.
\end{remark}

\begin{remark}
The classification of traveling wave fronts for \eqref{scalar nonlocal tw-parameter s} has not been completely understood yet. Specifically, when $c>c^*_{NL}$, it is not easy to determine whether traveling wave front decay with the fast order $\lambda^+(c)$ or the slow order $\lambda^-(c)$, where $\lambda^{\pm}(c)$ are defined as that in Remark \ref{rm:lambda_0} but independent on $q$. This open problem will be studied in our forthcoming paper. 
\end{remark}


The rest of this paper is organized as follows. 
In Section 2, we extend our argument to the single reaction-diffusion equation and complete the proof of Theorem \ref{th: threshold scalar equation}. In Section 3, we extend our analysis to the single nonlocal diffusion equation and complete the proof of Theorem \ref{th: threshold scalar equation nonlocal}. The proof for Theorem \ref{th: threshold scalar equation nonlocal} is more involved since the minimal traveling wave speed can not be computed explicitly, but is given by a variational formula.

\section{Threshold of the reaction-diffusion equation}\label{sec:threshold scalar}

In this section, we aim to prove Theorem~\ref{th: threshold scalar equation}.
First, it is well known that for each $s\geq0$, under the assumption  (A1), the minimal traveling wave is unique (up to a translation).
Together with the assumption (A2), one can use the standard compactness argument to conclude that $c^*(s)$ is continuous for all $s\ge 0$.
It follows from assumptions (A3)-(A5) and Remark~\ref{rk:a4+a5} that $c^*(s)$ is nondecreasing in $s$. Thus, we immediately obtain the following result.

\begin{lemma}\label{lem: th1-part1}
Assume that assumptions (A1)-(A5) hold. Then there exists a threshold $s^*\in[s_1,s_2)$ such that \eqref{def of threshold scalar} holds.
\end{lemma}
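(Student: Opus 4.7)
The plan is to assemble three ingredients that are already in place at this point in the paper: (i) the universal lower bound $c^*(s) \ge 2$ coming from the linearisation at $w = 0$; (ii) the monotonicity of $s \mapsto c^*(s)$, which follows from (A3) via the comparison principle (a minimal wave for $f(\cdot;s)$ becomes a subsolution for $f(\cdot;\hat s)$ whenever $\hat s > s$); and (iii) the continuity of $s \mapsto c^*(s)$, obtained from a standard compactness argument that uses the uniform Lipschitz bounds in (A2) and the uniqueness (up to translation) of the minimal front.

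My first step would be to define
\[
s^* := \sup\bigl\{\,s \ge 0 \ :\ c^*(s) = 2\,\bigr\}.
\]
Assumption (A4) gives $c^*(s_1) = 2$, so by monotonicity $c^*(s) = 2$ for every $s \in [0,s_1]$, and hence $s^* \ge s_1$. Assumption (A5) combined with continuity yields an open neighbourhood of $s_2$ on which $c^* > 2$, so $s^* < s_2$. Monotonicity together with $c^* \ge 2$ then forces $c^*(s) = 2$ for every $s < s^*$ and $c^*(s) > 2$ for every $s > s^*$, which is one half of \eqref{def of threshold scalar}.

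The remaining point, and the one I expect to be the real obstacle, is to show that $c^*(s^*)$ equals $2$ rather than jumping to a strictly larger value; equivalently, that $c^*$ is left-continuous at $s^*$. For this I would take any sequence $s_n \uparrow s^*$ with $c^*(s_n) = 2$, translate the corresponding minimal fronts so that $W_{s_n}(0) = 1/2$, and use the monostable bounds $0 < W_{s_n} < 1$ together with the travelling wave ODE $W_{s_n}'' + 2 W_{s_n}' + f(W_{s_n}; s_n) = 0$ to obtain uniform $C^2_{\mathrm{loc}}$ estimates. Extracting a subsequence and passing to the limit, using (A2) to control the dependence of $f$ on $s$, gives a monotone non-increasing $W$ solving $W'' + 2W' + f(W; s^*) = 0$ with $W(0) = 1/2$ and $0 \le W \le 1$; a standard phase-plane argument at the limits $\xi \to \pm \infty$ identifies $W(-\infty) = 1$ and $W(+\infty) = 0$, so $W$ is a travelling front of speed $2$ for parameter $s^*$, forcing $c^*(s^*) \le 2$ and hence $c^*(s^*) = 2$.

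The delicate part of this last step is ruling out degeneration of the limit $W$: without the normalisation $W_{s_n}(0) = 1/2$, the limiting profile could collapse to the constant $0$ or $1$. The monotonicity of each $W_{s_n}$ together with the fixed value $W(0) = 1/2$ prevents this, and (A2) supplies the quantitative control of $f(\cdot;s)$ in $s$ needed to pass to the limit inside the nonlinear term; once this is secured the conclusion $s^* \in [s_1,s_2)$ and the full statement \eqref{def of threshold scalar} follow immediately.
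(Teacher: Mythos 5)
Your proposal is correct and follows essentially the same route as the paper: the paper's argument for this lemma is a single short paragraph invoking (i) continuity of $s\mapsto c^*(s)$ via "the standard compactness argument" with (A2), (ii) monotonicity of $c^*(s)$ from (A3)--(A5) and Remark \ref{rk:a4+a5}, and (iii) the universal bound $c^*(s)\geq 2$, and you reproduce exactly these ingredients. The only difference is that you spell out the compactness argument explicitly at the borderline point $s^*$ (normalization $W_{s_n}(0)=1/2$, uniform local elliptic bounds, passage to the limit, phase-plane identification of $W(\pm\infty)$), whereas the paper leaves that as a cited standard fact; once continuity is granted, your separate verification that $c^*(s^*)=2$ is logically subsumed, but it is not wrong to make it explicit.
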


Thanks to Lemma~\ref{lem: th1-part1}, to prove Theorem~\ref{th: threshold scalar equation}, it suffices to show that
\eqref{asy tw threshold scalar} holds if and only if $s=s^*$.
Let $W_{s^*}$ be the minimal traveling wave satisfying \eqref{scalar tw-parameter s} with $s=s^*$
and $c^*(s^*)=2$. For simplicity, we denote $W_*:=W_{s^*}$.
The first and the most involved step is to show that if $s=s^*$, then \eqref{asy tw threshold scalar} holds.
To do this, we shall use a contradiction argument.
Assume that \eqref{asy tw threshold scalar} is not true. Then, 
it holds that (cf. \cite{Aronson Weinberger})
\bea\label{AS-U-infty-for-contradiction scalar}
\lim_{\xi\rightarrow+\infty}\frac{W_{*}(\xi)}{\xi e^{-\xi}}=A_0\quad \mbox{for some $A_0>0$.}
\eea


Under the condition \eqref{AS-U-infty-for-contradiction scalar},
we shall prove the following proposition.

\begin{proposition}\label{Prop-supersol}
Assume that assumptions (A1)-(A5) hold. In addition, if \eqref{AS-U-infty-for-contradiction scalar} holds,
then there exists an auxiliary continuous function $R_w(\xi)$ defined in $\mathbb{R}$ satisfying
\begin{equation}\label{decay rate of Rw}
R_w(\xi)=O(\xi e^{-\xi}) \quad \mbox{as $\xi\to\infty$},
\end{equation}
such that 
$$\bar{W}(\xi):=\min\{W_{*}(\xi)-R_w(\xi),1\}\geq (\not\equiv) 0$$
is a super-solution
satisfying
\bea\label{tw super solution scalar}
&N_0[\bar W]:=\bar W''+2\bar W'+f(\bar W;s^*+\delta_0)\le0,\quad  \mbox{a.e. in $\mathbb{R}$},
\eea
for some small $\delta_0>0$,
where ${\bar W}'(\xi_0^{\pm})$ exists and
${\bar W}'(\xi_0^+)\leq {\bar W}'(\xi_0^-)$
if $\bar W'$ is not continuous at $\xi_0$.

\end{proposition}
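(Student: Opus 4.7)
The strategy is an explicit construction combining two ingredients: the translation zero mode $\phi(\xi):=-W_*'(\xi)>0$ of the linearized operator $L:=\partial^2+2\partial+f'(W_*(\cdot);s^*)$ (obtained by differentiating \eqref{scalar tw-parameter s}, so that $L\phi=0$), which under hypothesis \eqref{AS-U-infty-for-contradiction scalar} satisfies $\phi(\xi)\sim A_0\xi e^{-\xi}$ at $+\infty$; and a particular solution $R_p$ of the inhomogeneous linearized equation $LR_p=g$ near $+\infty$, where $g(\xi):=f(W_*;s^*+\delta_0)-f(W_*;s^*)\ge 0$. On a left half-line the role of the truncation in the definition of $\bar W$ will be invoked to render the supersolution condition trivial.

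The first step is to quantify $g$. Since $f(0;s)\equiv 0$ and $f'(0;s)\equiv 1$ by (A1), we have $\partial_s f(0;s)=\partial_s f'(0;s)=0$, so Taylor expansion yields
\begin{equation*}
g(\xi)=\tfrac{1}{2}\delta_0\,\partial_s f''(0;s^*)\,W_*(\xi)^2+O(\delta_0 W_*^3+\delta_0^2 W_*^2),
\end{equation*}
hence $g(\xi)\sim c_0\delta_0\,\xi^2 e^{-2\xi}$ at $+\infty$, where $c_0:=\tfrac{1}{2}\partial_s f''(0;s^*)A_0^2>0$ by (A3). Solving the limiting constant-coefficient ODE $R_p''+2R_p'+R_p=g$ yields the explicit particular solution $R_p(\xi)=c_0\delta_0\,(\xi^2+4\xi+6)e^{-2\xi}$, positive for $\xi$ large and of decay $\xi^2e^{-2\xi}=o(\xi e^{-\xi})$.

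Next, set $R_w:=\chi R_p+\epsilon\phi$ for some small $\epsilon>0$ and a smooth cutoff $\chi$ with $\chi\equiv 1$ on $[M+1,\infty)$ and $\chi\equiv 0$ on $(-\infty,M]$, with $M$ to be fixed large. A direct computation using $L\phi=0$ and $LR_p=g+O(W_*\,R_p)$ gives, for $\xi\ge M+1$,
\begin{equation*}
N_0[W_*-R_w]=-\epsilon\delta_0\,\phi(\xi)\,\partial_s f'(W_*(\xi);\tilde s)+\text{l.o.t.},
\end{equation*}
with $\tilde s\in(s^*,s^*+\delta_0)$. Since $\partial_s f'(W_*;\tilde s)\approx\partial_s f''(0;s^*)W_*>0$ near $+\infty$, the leading contribution is $\sim-2\epsilon c_0\delta_0\,\xi^2 e^{-2\xi}<0$, which dominates the higher-order remainder provided $M$ is large enough. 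On $\xi\le M$, $R_w=\epsilon\phi$, giving $N_0[W_*-\epsilon\phi]=g(\xi)-\epsilon\delta_0\,|W_*'(\xi)|\,\partial_s f'(W_*(\xi);\tilde s)+O(\epsilon^2)$; where this is still nonnegative, we exploit the truncation by choosing $R_w$ large enough that $W_*-R_w\ge 1$, in which case $\bar W\equiv 1$ and $N_0[1]=f(1;s^*+\delta_0)=0$. The decay $R_w=O(\xi e^{-\xi})$ at $+\infty$ follows from $R_w\sim\epsilon\phi$ there; continuity of $R_w$ is built in through the cutoff.

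\textbf{Main obstacle.} The central technical difficulty is verifying $N_0[\bar W]\le 0$ uniformly on the whole real line, not just at $+\infty$. In the middle region where $W_*$ is bounded away from $0$ and $1$, the sign of $\partial_s f'(W_*;\tilde s)$ is not controlled, so the asymptotic cancellation that works at $+\infty$ does not extend directly. The argument must therefore be completed by a compactness/continuity argument exploiting the smallness of $\delta_0$ and $\epsilon$ together with the uniform Lipschitz bounds from (A2), and by making intensive use of the min-with-$1$ truncation to absorb any region where the naive supersolution inequality would fail. At any corner of $\bar W$ introduced by the cutoff or the truncation, the one-sided condition $\bar W'(\xi_0^+)\le \bar W'(\xi_0^-)$ must be checked directly; this reduces to explicit sign conditions on $W_*'$ and $R_w'$ at the transition points, which can be arranged by fine-tuning the location of the cutoff.
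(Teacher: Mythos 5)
Your construction on the right half-line is sound and closely parallels the spirit of the paper's Step~1: the translation mode $\phi=-W_*'$ has the same $\xi e^{-\xi}$ decay as the paper's $\sigma(\xi)e^{-\xi}$, and the particular solution $R_p\sim c_0\delta_0(\xi^2+4\xi+6)e^{-2\xi}$ correctly captures the lower-order forcing $g$. The failure is on the left, and it is structural rather than a missing computation.

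With $R_w=\chi R_p+\epsilon\phi$, both summands are strictly positive: $\phi=-W_*'>0$ everywhere, and $R_p>0$ since $\xi^2+4\xi+6>0$. Hence $R_w>0$ on all of $\mathbb{R}$, so $W_*-R_w<W_*<1$ everywhere, and the truncation $\min\{\cdot,1\}$ \emph{never} activates. The sentence ``we exploit the truncation by choosing $R_w$ large enough that $W_*-R_w\ge1$'' is impossible with this $R_w$, because $W_*-R_w\ge 1$ requires $R_w\le W_*-1<0$. This is precisely where the paper's construction does something your proposal does not: in Step~4 the paper takes $R_w(\xi)=-\varepsilon_4e^{\lambda_2\xi}<0$ on $(-\infty,\xi_2-\delta_3]$ with $0<\lambda_2<\lambda_w$, which forces $W_*-R_w>1$ on a left half-line (because $1-W_*\sim C_2e^{\lambda_w\xi}$ decays faster than $\varepsilon_4e^{\lambda_2\xi}$), so that $\bar W\equiv1$ there and $N_0[\bar W]=f(1;s^*+\delta_0)=0$. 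The three inner pieces of the paper's $R_w$ then pass through zero via $\sin(\delta_4(\xi-\xi_2))$ and connect to a fast-growing $\varepsilon_2 e^{\lambda_1\xi}$ whose $-c\lambda_1 R_w$ transport term dominates the zeroth-order errors. None of this is reproduced in your middle region.

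Concretely, in the middle region you have
$N_0[W_*-\epsilon\phi]=g(\xi)-\epsilon\,\phi(\xi)\bigl[f'(W_*;s^*+\delta_0)-f'(W_*;s^*)\bigr]+O(\epsilon^2)$,
with $g>0$ by (A3). The bracket $f'(W_*;s^*+\delta_0)-f'(W_*;s^*)$ has no definite sign at interior values of $W_*$ --- (A3) controls $f$ and $f''(0;\cdot)$, not $f'$ at interior states --- and even when it is positive the two terms are both $O(\delta_0)$ uniformly on a compactum, so smallness of $\epsilon$ only worsens the balance. There is no ``compactness/continuity'' argument that makes $N_0[W_*-\epsilon\phi]\le 0$ there, and the truncation is unavailable to absorb the failure. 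To salvage the proposal you would need to allow $R_w$ to become negative to the left of some finite point (which also changes the corner analysis from a formality to an essential step, as in the paper's $\angle\alpha_i<180^\circ$ checks), and to insert an intermediate regime where a large transport contribution $-cR_w'$ dominates; in other words, you would be led back to the piecewise construction in \eqref{definition of Rw}.
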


Next, we shall go through a lengthy process to prove Proposition~\ref{Prop-supersol}.
Hereafter, assumptions (A1)-(A5) are always assumed.



From the assumption (A1), by shifting the coordinates, we can immediately obtain
the following lemma.

\begin{lemma}\label{lm: divide R to 3 parts}
Let $\nu_{1}>0$ be 
an arbitrary constant. Then there exist
$$-\infty<\xi_2<0<\xi_1<+\infty\ \ \text{with}\ \ |\xi_1|,|\xi_2|\ \text{very large},$$
such that
the following hold:
\begin{itemize}
    \item[(1)]
    $\dps f(W_{*}(\xi);s^*)=W_{*}(\xi)+\frac{f''(0;s^*)}{2}W^2_{*}(\xi)+o(W^2_{*}(\xi))$ for all $\xi\in[\xi_1,\infty)$;
    \item[(2)]  $f'(W_{*}(\xi);s^*)<0$ for all $\xi\in(-\infty,\xi_2]$.
\end{itemize}
\end{lemma}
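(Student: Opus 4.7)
The plan is to obtain the two assertions as essentially immediate consequences of Taylor's theorem applied at the two boundary states $w=0$ and $w=1$, combined with the asymptotic behavior $W_*(+\infty)=0$ and $W_*(-\infty)=1$. The role of the parameter $\nu_1$ will only be to force $|\xi_1|,|\xi_2|$ to be taken sufficiently large (so that $W_*(\xi)$ lies in suitable neighborhoods of $0$ or $1$), which is a matter of enlarging the thresholds at the end.

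For assertion (1), I would invoke the $C^2$ regularity of $f(\cdot;s^*)$ together with the normalization $f(0;s^*)=0$ and $\gamma_0=f'(0;s^*)=1$. Taylor's theorem gives
\[
f(w;s^*)=w+\tfrac{f''(0;s^*)}{2}w^2+o(w^2)\qquad\text{as }w\to 0^+.
\]
Since $W_*$ is continuous with $W_*(+\infty)=0$, for every $\varepsilon>0$ there exists $\xi_1>0$ large (and in particular larger than any constant dictated by $\nu_1$) such that $0<W_*(\xi)<\varepsilon$ for all $\xi\ge\xi_1$. Substituting $w=W_*(\xi)$ into the Taylor expansion and choosing $\varepsilon$ small enough yields (1) on $[\xi_1,\infty)$.

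For assertion (2), I would use the monostable assumption (A1), which gives $f'(1;s^*)<0$. By continuity of $f'(\cdot;s^*)$, there exists $\delta>0$ such that $f'(w;s^*)<0$ for all $w\in[1-\delta,1]$. Since $W_*(-\infty)=1$ and $W_*$ is continuous and monotone decreasing, there exists $\xi_2<0$ with $|\xi_2|$ as large as desired such that $W_*(\xi)\in[1-\delta,1]$ for all $\xi\le\xi_2$, giving $f'(W_*(\xi);s^*)<0$ throughout $(-\infty,\xi_2]$.

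Finally, I would simply enlarge $\xi_1$ and $|\xi_2|$, if needed, so that both hold simultaneously and all implicit smallness requirements tied to $\nu_1$ (to be used in the construction of $R_w$ in Proposition \ref{Prop-supersol}) are satisfied. There is no real obstacle here; the only thing to be careful about is to record that both $\xi_1$ and $|\xi_2|$ may be taken arbitrarily large, since later steps presumably rely on that freedom.
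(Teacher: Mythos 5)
Your argument is correct and coincides with what the paper intends: the paper states only that the lemma follows ``immediately'' from assumption (A1) by shifting coordinates, and your Taylor expansion of $f(\cdot;s^*)$ at $w=0$ together with the continuity of $f'(\cdot;s^*)$ near $w=1$ and the limits $W_*(+\infty)=0$, $W_*(-\infty)=1$ is exactly the routine justification being invoked. You also correctly note that the constant $\nu_1$ plays no visible role in the conclusion except to emphasize that $\xi_1$ and $|\xi_2|$ may be taken as large as the later construction requires.
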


\subsection{Construction of the super-solution}\label{subsec-2-1}

\noindent

Let us define $R_w(\xi)$ as (see Figure \ref{Figure scalar})
\begin{equation}\label{definition of Rw}
R_w(\xi)=\begin{cases}
\varepsilon_1\sigma(\xi) e^{-\xi},&\ \ \mbox{for}\ \ \xi\ge\xi_{1}+\delta_1,\\
\varepsilon_2 e^{\lambda_1\xi},&\ \ \mbox{for}\ \ \xi_{2}+\delta_2\le\xi\le\xi_{1}+\delta_1,\\
\varepsilon_3\sin(\delta_4(\xi-\xi_2)),&\ \ \mbox{for}\ \ \xi_2-\delta_3\le\xi\le\xi_2+\delta_2,\\
-\varepsilon_4e^{\lambda_2\xi},&\ \ \mbox{for}\ \ \xi\le\xi_2-\delta_3,
\end{cases}
\end{equation}
where $\delta_{i=1,\cdots,4}>0$, $\lambda_{n=1,2}>0$, and $\sigma(\xi)>0$ will be
determined such that $\bar W(\xi)$ satisfies \eqref{decay rate of Rw} and \eqref{tw super solution scalar}. Moreover, we should choose
positive $\varepsilon_{j=1,\cdots,4}\ll A_0$ ($A_0$ is defined in \eqref{AS-U-infty-for-contradiction scalar}) such that $R_w(\xi)\ll W_*(\xi)$ and $\bar W(\xi)$ is continuous for all $\xi\in\mathbb{R}$.

Since $f(\cdot;s^*)\in C^2$, there exist $K_1>0$ and $K_2>0$ such that
\bea\label{K1K2}
|f''(W_*(\xi);s^*)|< K_1,\quad |f'(W_*(\xi);s^*)|< K_2\quad \mbox{for all}\quad\xi\in\mathbb{R}.
\eea
We set $\lambda_1>0$ 
large enough such that
\begin{equation}\label{condition on lambda 1}
-2\lambda_1-\lambda_1^2+K_2<0\ \text{and}\ \lambda_1>K_2.
\end{equation}
Furthermore, there exists $K_3>0$ such that
\begin{equation}\label{condition on xi_2 scalar}
f'(W_*(\xi); s^*)\le -K_3<0\quad\text{for all}\quad\xi\le \xi_2.
\end{equation}
We set 
$$0<\lambda_2<\lambda_w:=\sqrt{1-f'(1;s^*)}-1$$
sufficiently small
such that
\bea\label{lambda 2}
\lambda_2^2+2\lambda_2-K_3<0.
\eea


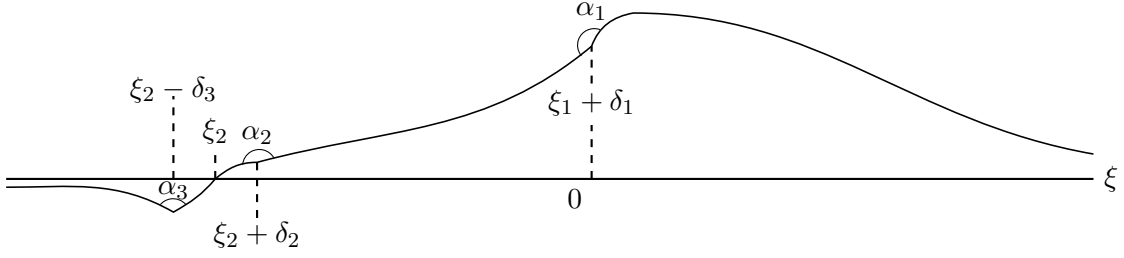
\begin{figure}
\begin{center}
\begin{tikzpicture}[scale = 1.1]
\draw[thick](-6,0) -- (7,0) node[right] {$\xi$};
\draw [semithick] (-6, -0.1) to [ out=0, in=150] (-4,-0.4) to [out=30, in=230] (-3.5,0)  to [out= 40, in=180] (-3,0.2) to [out=15, in=220] (1,1.6) to [out=70,in=190] (1.5,2) to [out=0,in=170] (7,0.3);
\node[below] at (1,1.2) {$\xi_{1}+\delta_1$};
\node[below] at (0.8,0) {$0$};
\draw[dashed] [thick] (-3.5,0)-- (-3.5,0.3);
\node[above] at (-3.5,0.3) {$\xi_2$};
\draw[dashed] [thick] (-3,0.2)-- (-3,-0.5);
\draw[dashed] [thick] (-4,0)-- (-4,1);
\node[above] at (-4,0.8) {$\xi_2-\delta_3$};
\draw[dashed] [thick] (1,0)-- (1,0.65);
\draw[dashed] [thick] (1,1.15)-- (1,1.6);
\node[below] at (-3,-0.4) {$\xi_2+\delta_2$};
\draw [thin] (-3.85,-0.31) arc [radius=0.2, start angle=40, end angle= 140];
\node[above] at (-4,-0.36) {$\alpha_3$};
\draw [thin] (-2.8,0.255) arc [radius=0.2, start angle=30, end angle= 175];
\node[above] at (-3,0.3) {$\alpha_2$};
\draw [thin] (1.1,1.8) arc [radius=0.2, start angle=70, end angle= 220];
\node[above] at (1,1.8) {$\alpha_1$};
\end{tikzpicture}
\caption{the construction of $R_w(\xi)$.}\label{Figure scalar}
\end{center}
\end{figure}


\medskip

We now divide the proof into several steps.

\noindent{\bf{Step 1}:} We consider $\xi\in[\xi_1+\delta_1,\infty)$ where $\delta_1>0$ is small enough and will be determined in Step 2.
In this case, we have
\beaa
R_w(\xi)=\varepsilon_1\sigma(\xi)\ e^{-\xi}
\eeaa
for some small $\varepsilon_1\ll A_0$ 
such that $\bar W=W_*-R_w>0$ for $\xi\geq \xi_1+\delta_1$.

Note that $W_*$ satisfies \eqref{scalar tw-parameter s} with $c=2$.
By some  straightforward computations, we have
\begin{equation}\label{N0 inequality step 1}
\begin{aligned}
N_0[\bar W]=&-R''_w-2 R'_w-f(W_*;s^*)+f(W_*-R_w;s^*+\delta_0)\\
=&-R''_w-2 R'_w-f(W_*;s^*)+f(W_*-R_w;s^*)\\
&-f(W_*-R_w;s^*)+f(W_*-R_w;s^*+\delta_0).
\end{aligned}
\end{equation}
By the assumption (A1) and the statement (1) of Lemma \ref{lm: divide R to 3 parts}, since $W_*\ll 1$ and $R_w\ll W_*$ for $\xi\in[\xi_1+\delta_1,\infty)$, we have
\begin{equation}\label{estimate 1}
-f(W_*;s^*)+f(W_*-R_w;s^*)= -R_w+f''(0;s^*)(\frac{R^2_w}{2}-W_*R_w)+o((W_*)^2).
\end{equation}
By the assumption (A2) and the statement (1) of Lemma \ref{lm: divide R to 3 parts}, there exists $C_1>0$ such that
\begin{equation}\label{estimate 2}
-f(W_*-R_w;s^*)+f(W_*-R_w;s^*+\delta_0)\le C_1\delta_0(W_*-R_w)^2+o((W_*)^2).
\end{equation}
From \eqref{K1K2}, \eqref{N0 inequality step 1}, \eqref{estimate 1}, \eqref{estimate 2}, 
we have
\bea\label{estimate 3}
N_0[\bar W]\le -\varepsilon_1\sigma''e^{-\xi}+K_1(\frac{R_w^2}{2}+W_*R_w)+C_1\delta_0W_*^2+o((W_*)^2.
\eea

Now, we define
$$\sigma(\xi):=4e^{-\frac{1}{2}(\xi-\xi_1)}-4+4\xi-4\xi_1$$
which satisfies 
$$\sigma(\xi_1)=0,\ \sigma'(\xi)=4-2e^{-\frac{1}{2}(\xi-\xi_1)},\ \sigma''(\xi)=e^{-\frac{1}{2}(\xi-\xi_1)}.$$
Moreover, $\sigma(\xi)= O(\xi)$ as $\xi\to\infty$ implies that $R_w$ satisfies \eqref{decay rate of Rw}.

Due to \eqref{AS-U-infty-for-contradiction scalar} and the equation of $W_*$, we may also assume
\bea\label{W-M}
W_*(\xi) \leq 2A_0\xi e^{-\xi}\quad \mbox{for all}\quad\xi\geq \xi_1.
\eea
Then, from \eqref{estimate 3}, up to enlarging $\xi_1$ if necessary, we always have 
\beaa
N_0[\bar W]\le -\varepsilon_1e^{-\frac{1}{2}(\xi-\xi_1)}e^{-\xi}+K_1(\frac{R_w^2}{2}+W_*R_w)+C_1\delta_0W_*^2+o((W_*)^2)\le 0
\eeaa
for all sufficiently small $\delta_0>0$
since $R_w^2(\xi)$, $W_*R_w(\xi)$, and  $W_*^2(\xi)$ are $o(e^{-\frac{3}{2}\xi})$
as $\xi\to\infty$
from \eqref{W-M} and the definition of $R_w$.
Therefore, $N_0[\bar W]\leq 0$ for $\xi\ge \xi_1$.

\medskip

\noindent{\bf{Step 2:}} We consider $\xi\in[\xi_2+\delta_2,\xi_1+\delta_1]$ for some small $\delta_2>0$, and small $\delta_1>0$ satisfying
\bea\label{cond delta 1 scalar}
1+3(1-e^{-\frac{\delta_1}{2}})-2\delta_1>0.
\eea
From the definition of $R_w$ in Step 1, it is easy to check that $R'_w((\xi_1+\delta_1)^+)>0$ under the condition \eqref{cond delta 1 scalar}.
In this case, we have $R_w(\xi)=\varepsilon_2 e^{\lambda_1\xi}$
for some large $\lambda_1>0$ satisfying \eqref{condition on lambda 1}.

We first choose
\begin{equation}\label{condition on epsilon 2-RD}
\varepsilon_2=\varepsilon_1\Big(4e^{-\frac{\delta_1}{2}}-4+4\delta_1\Big)e^{-(1+\lambda_1)(\xi_1+\delta_1)}
\end{equation}
such that $R_w(\xi)$ is continuous at $\xi=\xi_{1}+\delta_1$. Then, from \eqref{condition on epsilon 2-RD}, we have
$$R'_w((\xi_1+\delta_1)^{+})=\varepsilon_1\sigma'(\xi_1+\delta_1)e^{-(\xi_1+\delta_1)}- R_w(\xi_1+\delta_1)>R'_w((\xi_1+\delta_1)^{-})=\lambda_1R_w(\xi_1+\delta_1)$$
is equivalent to
$$1+(3+2\lambda_1)(1-e^{-\frac{\delta_1}{2}})>2(1+\lambda_1)\delta_1,$$
which holds by taking $\delta_1$ sufficiently small.
This implies that $\angle\alpha_1<180^{\circ}$.

By some  straightforward computations, we have
\begin{equation*}
\begin{aligned}
N_0[\bar W]=&-(2\lambda_1+\lambda_1^2)R_w-f(W_*;s^*)+f(W_*-R_w;s^*+\delta_0)\\
=&-(2\lambda_1+\lambda_1^2)R_w-f(W_*;s^*)+f(W_*-R_w;s^*)\\
&-f(W_*-R_w;s^*)+f(W_*-R_w;s^*+\delta_0).
\end{aligned}
\end{equation*}
Thanks to \eqref{K1K2}, we have
$$-f(W_*;s^*)+f(W_*-R_w;s^*)< K_2R_w.$$
Moreover, by assumption (A2), 
$$-f(W_*-R_w;s^*)+f(W_*-R_w;s^*+\delta_0)\le L_0\delta_0.$$
Then, 
since
$\lambda_1$ satisfies \eqref{condition on lambda 1}, we have
$$L_0\delta_0<\varepsilon_2(\lambda_1^2+2\lambda_1-K_2) e^{\lambda_1(\xi_2+\delta_2)}$$
for all sufficiently small $\delta_0>0$,
which implies that $N_0[\bar W]\le 0$ for all $\xi\in[\xi_2+\delta_2,\xi_1+\delta_1]$.

\medskip

\noindent{\bf{Step 3:}} We consider $\xi\in[\xi_2-\delta_3,\xi_2+\delta_2]$ for some small $\delta_2,\delta_3>0$. In this case, $R_w(\xi)=\varepsilon_3\sin(\delta_4(\xi-\xi_2))$.
We first verify the following Claim.
\begin{claim}\label{cl scalar}
For any $\delta_2$ with $\delta_2>\frac{1}{\lambda_1}$, there exist
$\varepsilon_3>0$ and small $\delta_4>0$
such that 
$$R_w((\xi_2+\delta_2)^+)=R_w((\xi_2+\delta_2)^-)$$ 
and $\angle\alpha_2<180^{\circ}$.
\end{claim}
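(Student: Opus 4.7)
The plan is to read the two conditions — continuity of $R_w$ at $\xi_0:=\xi_2+\delta_2$ and the corner inequality $\angle\alpha_2<180^\circ$ — as two constraints on the two free parameters $\varepsilon_3$ and $\delta_4$, and to show that the hypothesis $\delta_2>1/\lambda_1$ is exactly what makes them compatible. First I translate the angle condition into the super-solution jump inequality of Proposition~\ref{Prop-supersol}, which at any discontinuity point $\xi_0$ of $\bar W'$ requires $\bar W'(\xi_0^+)\le \bar W'(\xi_0^-)$; since $W_*\in C^2$, this amounts to
$$R_w'(\xi_0^+)\ge R_w'(\xi_0^-),$$
i.e.\ the graph of $R_w$ makes a convex (upward-bending) corner at $\xi_0$, which is the geometric content of $\angle\alpha_2<180^\circ$.

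Next I impose the matching. Continuity at $\xi_0$ forces
$$\varepsilon_3\sin(\delta_4\delta_2)=\varepsilon_2 e^{\lambda_1(\xi_2+\delta_2)},$$
which, provided $\delta_4\delta_2\in(0,\pi/2)$, uniquely determines a positive $\varepsilon_3$ in terms of $\delta_4$. Using this identity to rewrite $R_w'(\xi_0^+)=\lambda_1\varepsilon_2 e^{\lambda_1(\xi_2+\delta_2)}=\lambda_1\varepsilon_3\sin(\delta_4\delta_2)$, while the sine branch on the left gives $R_w'(\xi_0^-)=\varepsilon_3\delta_4\cos(\delta_4\delta_2)$, the jump inequality reduces (after dividing by the positive quantity $\varepsilon_3\cos(\delta_4\delta_2)$) to the one-parameter inequality
$$\lambda_1\tan(\delta_4\delta_2)\ge \delta_4.$$

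Finally I verify this inequality is actually strict under the hypothesis. The elementary bound $\tan x>x$ on $(0,\pi/2)$ gives $\lambda_1\tan(\delta_4\delta_2)>\lambda_1\delta_2\,\delta_4$, which is strictly greater than $\delta_4$ whenever $\lambda_1\delta_2>1$, i.e.\ precisely when $\delta_2>1/\lambda_1$. The procedure is then: fix any $\delta_4\in(0,\pi/(2\delta_2))$ first, and then set
$$\varepsilon_3:=\frac{\varepsilon_2 e^{\lambda_1(\xi_2+\delta_2)}}{\sin(\delta_4\delta_2)}>0,$$
so that continuity holds by construction and the strict angle inequality $\angle\alpha_2<180^\circ$ follows automatically from the derivative comparison above. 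The argument is almost entirely algebraic and I do not anticipate a real obstacle; the only genuine observation is that the critical scale $1/\lambda_1$ in the hypothesis is forced by the competition between the exponential growth rate $\lambda_1$ on the right branch and the nearly linear slope $\delta_4$ of the sine branch on the left.
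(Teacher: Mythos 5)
Your proposal is correct and follows essentially the same route as the paper: fix $\varepsilon_3$ by the continuity condition $\varepsilon_3\sin(\delta_4\delta_2)=\varepsilon_2 e^{\lambda_1(\xi_2+\delta_2)}$, and then compare the one-sided derivatives to see that the corner inequality is precisely $\lambda_1\tan(\delta_4\delta_2)\ge\delta_4$. The only small difference is that the paper obtains this by letting $\delta_4\to0$ (so $R_w'((\xi_2+\delta_2)^-)\to R_w(\xi_2+\delta_2)/\delta_2$) and concludes for sufficiently small $\delta_4$, whereas you invoke $\tan x>x$ to get the strict inequality for \emph{every} $\delta_4\in(0,\pi/(2\delta_2))$ — a mild sharpening, but the same computation.
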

\begin{proof}
Note that 
$$R_w((\xi_2+\delta_2)^+)=\varepsilon_2e^{\lambda_1(\xi_2+\delta_2)},\ R_w((\xi_2+\delta_2)^-)=\varepsilon_3\sin(\delta_4\delta_2).$$
Therefore, we may take
\bea\label{epsilon 3}
\varepsilon_3= \frac{\varepsilon_2e^{\lambda_1(\xi_2+\delta_2)}}{\sin(\delta_4\delta_2)}>0
\eea
such that $R_w((\xi_2+\delta_2)^+)=R_w((\xi_2+\delta_2)^-)$.

By some  straightforward computations, we have
$R'_w((\xi_2+\delta_2)^+)=\lambda_1\varepsilon_2e^{\lambda_1(\xi_2+\delta_2)}$ and
\beaa
R'_w((\xi_2+\delta_2)^-)=\varepsilon_3\delta_4\cos(\delta_4\delta_2)=\frac{\varepsilon_2e^{\lambda_1(\xi_2+\delta_2)}}{\sin(\delta_4\delta_2)}\delta_4\cos(\delta_4\delta_2),
\eeaa
which yields that
$$R'_w((\xi_2+\delta_2)^-)\rightarrow \varepsilon_2e^{\lambda_1(\xi_2+\delta_2)}/\delta_2\ \ \text{as}\ \ \delta_4\to0.$$
In other words, as $\delta_4\to0$,
\bea\label{delta 2}
R'_w((\xi_2+\delta_2)^+)>R'_w((\xi_2+\delta_2)^-)\ \ \text{is equivalent to}\ \ \delta_2>\frac{1}{\lambda_1}.
\eea
Therefore, we can choose $\delta_4>0$ sufficiently small so that $\angle\alpha_2<180^{\circ}$.
This completes the proof of Claim~\ref{cl scalar}.
\end{proof}

Next, we verify the differential inequality of $N_0[\bar W]$ for $\xi\in[\xi_2-\delta_3,\xi_2+\delta_2]$. By some  straightforward computations, we have
\begin{equation*}
\begin{aligned}
N_0[\bar W]=&\delta^2_4R_w-2\varepsilon_3\delta_4\cos(\delta_4(\xi-\xi_2))\\
&-f(W_*;s^*)+f(W_*-R_w;s^*)-f(W_*-R_w;s^*)+f(W_*-R_w;s^*+\delta_0).
\end{aligned}
\end{equation*}
The same argument as in Step 2 implies that
$$-f(W_*;s^*)+f(W_*-R_w;s^*)\le K_2R_w\ \ \text{and}\ \ -f(W_*-R_w;s^*)+f(W_*-R_w;s^*+\delta_0)\le L_0\delta_0,$$
which yields that
\beaa
N_0[\bar W]\leq\delta^2_4R_w-2\varepsilon_3\delta_4\cos(\delta_4(\xi-\xi_2))+K_2R_w+L_0\delta_0.
\eeaa
We first focus on $\xi\in[\xi_2,\xi_2+\delta_2]$. By \eqref{epsilon 3}, \eqref{delta 2}, and the definition of $\lambda_1$ (see \eqref{condition on lambda 1}), we can choose
$\delta_2\in(1/\lambda_1,1/K_2)$ such that
\beaa
\min_{\xi\in[\xi_2,\xi_2+\delta_2]}\delta_4\varepsilon_3\cos(\delta_4(\xi-\xi_2))\to\frac{\varepsilon_2e^{\lambda_1(\xi_2+\delta_2)}}{\delta_2}
=\frac{R_w(\xi_2+\delta_2)}{\delta_2}> K_2R_w(\xi_2+\delta_2)
\quad\text{as}\ \delta_4\to0.
\eeaa
Thus, we have
\beaa
\min_{\xi\in[\xi_2,\xi_2+\delta_2]}\Big[\delta_4\varepsilon_3\cos(\delta_4(\xi-\xi_2))-(K_2+\delta_4^2)R_w(\xi)\Big]>0,
\eeaa
for all sufficiently small $\delta_4>0$.
Then, for all sufficiently small $\delta_0>0$, 
we see that $N_0[\bar W]\le 0$ for $\xi\in[\xi_2,\xi_2+\delta_2]$.

For $\xi\in[\xi_2-\delta_3,\xi_2]$,
by setting $\delta_3>0$ small enough,
$N_0[\bar W]\le 0$ can be verified easier by the same argument since $R_w<0$. This completes the Step 3.

\medskip

\noindent{\bf{Step 4:}} We consider $\xi\in(-\infty,\xi_2-\delta_3]$. In this case, we have $R_w(\xi)=-\varepsilon_4e^{\lambda_2\xi}<0$.
Recall that we choose $0<\lambda_2<\lambda_w$
and 
$$1-W_*(\xi)\sim C_2 e^{\lambda_{w}\xi}\ \ \text{as}\ \ \xi\to-\infty.$$
Then, there exists $M>0$ such that 
$$\bar W:=\min\{W_*-R_w,1\}\equiv 1\ \ \text{for all} \ \ \xi\le -M,$$
and thus $N_0[\bar W]\le 0$ for all $\xi\le -M$. Therefore, we only need to show 
$$N_0[\bar W]\le 0\ \ \text{for all}\ \ -M\le\xi\le -\xi_2-\delta_3.$$

We first choose 
$$\varepsilon_4=\varepsilon_3\sin(\delta_4\delta_3)/e^{\lambda_2(\xi_2-\delta_3)}$$
such that $R_w$ is continuous at $\xi_2-\delta_3$. It is easy to check that 
$$R'_w((\xi_2-\delta_3)^+)>0>R'_w((\xi_2-\delta_3)^-),$$
 and hence $\angle\alpha_3< 180^{\circ}$.

By some  straightforward computations, we have
\begin{equation*}
\begin{aligned}
N_0[\bar W]=&-(\lambda_2^2+2\lambda_2)R_w-f(W_*;s^*)+f(W_*-R_w;s^*+\delta_0)\\
=&-(\lambda_2^2+2\lambda_2)R_w-f(W_*;s^*)+f(W_*-R_w;s^*)\\
&-f(W_*-R_w;s^*)+f(W_*-R_w;s^*+\delta_0).
\end{aligned}
\end{equation*}
From \eqref{condition on xi_2 scalar}, we have 
$$-f(W_*;s^*)+f(W_*-R_w;s^*)< K_3R_w<0.$$
Together with the assumption  (A2), we have
\beaa
N_0[\bar W]\leq -(\lambda_2^2+2\lambda_2-K_3)R_w+L_0\delta_0\quad\text{for all}\quad \xi\in[-M,\xi_2-\delta_3].
\eeaa
In view of  \eqref{lambda 2},
we can assert that 
$$N_0[\bar W]\le 0\ \ \text{for all}\ \ \xi\in[-M,\xi_2-\delta_3],$$
 provided that $\delta_0$ is sufficiently small.
This completes the Step 4.

\subsection{Proof of Theorem \ref{th: threshold scalar equation}}

\noindent

We first complete the proof of Proposition~\ref{Prop-supersol}.
\begin{proof}[Proof of Proposition~\ref{Prop-supersol}]
From the discussion from Step 1 to Step 4 in \S \ref{subsec-2-1},
we are now equipped with a suitable function $R_w(\xi)$ 
defined as in \eqref{definition of Rw} such that 
$$\bar W (\xi)=\min \{W_*(\xi)-R_w(\xi),1\},$$
 which is independent of the choice of all sufficiently small $\delta_0>0$, forms
a super-solution satisfying \eqref{tw super solution scalar}.
Therefore, we complete the proof of Proposition~\ref{Prop-supersol}.
\end{proof}

Now, we are ready to prove Theorem~\ref{th: threshold scalar equation} as follows.

\begin{proof}[Proof of Theorem~\ref{th: threshold scalar equation}]
In view of Lemma~\ref{lem: th1-part1}, we have obtained \eqref{def of threshold scalar}.
It suffices to show that \eqref{asy tw threshold scalar} holds if and only if $s=s^*$.
First, we show that
\bea\label{th1:goal-1}
s=s^* \quad \Longrightarrow \quad \mbox{\eqref{asy tw threshold scalar} holds}.
\eea
Suppose that \eqref{asy tw threshold scalar} does not hold. Then $W_*$ satisfies \eqref{AS-U-infty-for-contradiction scalar}. In view of Proposition~\ref{Prop-supersol}, we can
choose $\delta_0>0$ sufficiently small such that
$$\bar W(\xi)= \min\{W_*(\xi)-R_w(\xi),1\}\geq(\not\equiv) 0$$
satisfies \eqref{tw super solution scalar}. Next,
we consider the following Cauchy problem with compactly supported initial datum $0\le w_0(x)\leq \bar W(x)$:
\begin{equation}\label{scalar cauchy problem}
\left\{
\begin{aligned}
&w_t=w_{xx}+f(w;s^*+\delta_0),\ t\ge 0,\ x\in\mathbb{R},\\
&w(0,x)=w_0(x),\ x\in\mathbb{R}.
\end{aligned}
\right.
\end{equation}
Then, in view of \eqref{def of threshold scalar}, we see that $c^*(s^*+\delta_0)>2$ (the minimal speed is nonlinearly selected). Therefore, we can apply Theorem 2 of \cite{Rothe1981}
to conclude that the spreading speed of the Cauchy problem \eqref{scalar cauchy problem} is strictly greater than $2$.

On the other hand, we
define $\bar{w}(t,x):=\bar W(x-2 t)$, and hence 
$$\bar{w}(0,x)=\bar W(x)\geq w_0(x)\ \ \text{for all}\ \ x\in\mathbb{R}.$$
Since $\bar W$ satisfies \eqref{tw super solution scalar},
$\bar{w}$ forms a super-solution of \eqref{scalar cauchy problem}.
This immediately implies that the spreading speed of the solution, 
namely $w(t,x)$, of \eqref{scalar cauchy problem} is slower than or equal to $2$, due to the comparison principle.
This contradicts the spreading speed of the Cauchy problem \eqref{scalar cauchy problem}, which is strictly greater than $2$.
Thus, we obtain \eqref{th1:goal-1}.


Finally, we prove that
\bea\label{th1:goal-2}
\mbox{\eqref{asy tw threshold scalar} holds} \quad \Longrightarrow \quad s=s^*.
\eea
Note that for $s>s^*$, from \eqref{def of threshold scalar} we see that $c^{*}(s)>2$; so the asymptotic behavior of $W_s$ at $\xi\approx+\infty$ in Proposition \ref{prop: classification scalar} implies that
\eqref{asy tw threshold scalar} does not hold for any $s>s^*$. Therefore,
we only need to show that if $s<s^*$, then \eqref{asy tw threshold scalar} does not hold.
We assume by contradiction that there exists $s_0\in(0,s^*)$ such that
the corresponding minimal traveling wave satisfies
\bea\label{W-S0+infty}
W_{s_0}(\xi)=B_0 e^{-\xi}+o(e^{-\xi})\quad\text{as}\quad \xi\to+\infty
\eea
for some $B_0>0$. For $\xi\approx -\infty$, we have
\bea\label{W-S0-infty}
1-W_{s_0}(\xi)=C_0 e^{\hat\lambda\xi}+o(e^{\hat\lambda\xi})\quad\text{as}\quad \xi\to-\infty
\eea
for some $C_0>0$, where $\hat\lambda:=\sqrt{1-f'(1;s_0)}-1$.
Recall that the asymptotic behavior of $W_s^*$ at $\pm\infty$ satisfies
\begin{equation}\label{W-S*-pm-infty}
\begin{aligned}
W_{s^*}(\xi)=B e^{-\xi}+o(e^{-\xi})\ \text{as}\ \xi\to+\infty,\\
1-W_{s^*}(\xi)=C e^{\lambda_w\xi}+o(e^{\lambda_w\xi})\ \text{as}\ \xi\to-\infty,
\end{aligned}
\end{equation}
for some $B,C>0$, where $\lambda_w=\sqrt{1-f'(1;s^*)}-1$. In view of the assumption (A3), we have $\lambda_w> \hat\lambda$.
Combining \eqref{W-S0+infty}, \eqref{W-S0-infty}, and \eqref{W-S*-pm-infty}, there exists $L>0$ sufficiently large such that
$$W_{s^*}(\xi-L)> W_{s_0}(\xi)\ \ \text{for all}\ \ \xi\in\mathbb{R}.$$
Now, we define
\beaa
L^*:=\inf\{L\in\mathbb{R}\ |\ W_{s^*}(\xi-L)\ge W_{s_0}(\xi)\ \text{for all}\ \xi\in\mathbb{R}\}.
\eeaa
By the continuity, we have 
$$W_{s^*}(\xi-L^*)\geq W_{s_0}(\xi)\ \ \text{for all}\ \ \xi\in\mathbb{R}.$$
 If there exists $\xi^*\in\mathbb{R}$ such that
$W_{s^*}(\xi^*-L^*)= W_{s_0}(\xi^*)$, by the strong maximum principle, we have $W_{s^*}(\xi-L^*)=W_{s_0}(\xi)$ for $\xi\in\mathbb{R}$,
which is impossible since $W_{s^*}(\cdot-L^*)$ and $W_{s_0}(\cdot)$ satisfy different equations. Consequently,
$$W_{s^*}(\xi-L^*)> W_{s_0}(\xi)\ \ \text{for all}\ \ \xi\in\mathbb{R}.$$
In particular, we have
\beaa
\lim_{\xi\to\infty}\frac{W_{s^*}(\xi-L^*)}{W_{s_0}(\xi)}\geq1.
\eeaa
Furthermore, we can claim that
\bea\label{limit=1}
\lim_{\xi\to\infty}\frac{W_{s^*}(\xi-L^*)}{W_{s_0}(\xi)}=1.
\eea
Otherwise, if the limit in \eqref{limit=1} is strictly bigger than 1, together with
\beaa
\lim_{\xi\to-\infty}\frac{1-W_{s^*}(\xi-L^*)}{1-W_{s_0}(\xi)}=0,
\eeaa
we can easily  find $\varepsilon>0$ sufficiently small such that
$$W_{s^*}(\xi-(L^*+\varepsilon))> W_{s_0}(\xi)\ \ \text{for}\ \ \xi\in\mathbb{R},$$
which contradicts the definition of $L^*$.
As a result, from \eqref{W-S0+infty}, \eqref{W-S*-pm-infty} and \eqref{limit=1}, we obtain $B_0=Be^{L^*}$.

On the other hand, we set $\widehat{W}(\xi)=W_{s^*}(\xi-L^*)-W_{s_0}(\xi)$. Then $\widehat{W}(\xi)$ satisfies
\bea\label{W-hat-eq2}
\widehat{W}''+2\widehat{W}'+\widehat{W}+J(\xi)=0, \quad \xi\in\mathbb{R},
\eea
where
$$J(\xi)=f(W_{s^*};s^*)- W_{s^*}-f(W_{s_0};s_0)+ W_{s_0}.$$
By the assumption  (A1) and Taylor's Theorem, there exist $\eta_1\in(0, W_{s^*})$ and $\eta_2\in(0,W_{s_0})$ such that
\beaa
J(\xi)&=&f''(\eta_1;s^*)W^2_{s^*}-f''(\eta_2;s_0)W^2_{s_0}\\
      &=&f''(\eta_1;s^*)(W^2_{s^*}-W^2_{s_0})+[f''(\eta_1;s^*)-f''(\eta_2;s_0)]W^2_{s_0}\\
      &=&f''(\eta_1;s^*)(W_{s^*}+W_{s_0})\widehat{W}+[f''(\eta_1;s^*)-f''(\eta_2;s_0)]W^2_{s_0}.
\eeaa
Define 
$$J_1(\xi):=f''(\eta_1;s^*)(W_{s^*}+W_{s_0})\widehat{W},$$ 
$$J_2(\xi):=[f''(\eta_1;s^*)-f''(\eta_2;s_0)]W^2_{s_0}.$$
 It is easy to see that $J_1(\xi)=o(\widehat{W})$ for $\xi\approx+\infty$. Next, we will show $J_2(\xi)=o(\widehat{W})$ for $\xi\approx+\infty$.

Since $f''(0;s^*)>f''(0;s_0)$ (from the assumption (A3)), we can find small $\delta>0$ such that
$$\min_{\eta\in[0,\delta]}f''(\eta;s^*)>\max_{\eta\in[0,\delta]}f''(\eta;s_0)$$
and thus
there exist $\kappa_1,\kappa_2>0$ such that
\bea\label{J-lower bound}
\kappa_1e^{-2\xi}\ge J_2(\xi)=[f''(\eta_1;s^*)-f''(\eta_2;s_0)]W^2_{s_0}(\xi)\ge \kappa_2 e^{-2\xi}\quad \mbox{for all large $\xi$}.
\eea

We now claim that $J_2(\xi)=o(\widehat{W})$ as $\xi\to+\infty$.
For contradiction, we assume that it is not true. Then there exists $\{\xi_n\}$ with
$\xi_n\to+\infty$ as $n\to\infty$ such that for some $\kappa_3>0$,
\bea\label{kappa3}
\frac{J_2(\xi_n)}{\widehat{W}(\xi_n)}\geq \kappa_3\quad \mbox{for all $n\in\mathbb{N}$.}
\eea
Set $\widehat{W}(\xi)=\alpha(\xi)e^{-2\xi}$, where $\alpha(\xi)>0$ for all $\xi$.
By substituting it into \eqref{W-hat-eq2},
we have 
\bea\label{alpha-eq}
L(\xi):=(\alpha''(\xi)-2\alpha'(\xi)+\alpha(\xi))e^{-2\xi}+J_1(\xi)+J_2(\xi)=0\quad \mbox{for all large $\xi$}.
\eea
By \eqref{J-lower bound} and \eqref{kappa3}, we have 
\bea\label{alpha-bdd}
0<\alpha(\xi_n)\leq \frac{\kappa_1}{\kappa_3}\quad  \mbox{for all $n\in\mathbb{N}$.}
\eea
Now, we will reach a contradiction by dividing the behavior of $\alpha(\cdot)$ into two cases:
\begin{itemize}
    \item[(i)] $\alpha(\xi)$ oscillates for all large $\xi$;
    \item[(ii)] $\alpha(\xi)$ is monotone for all large $\xi$.
\end{itemize}
For case (i), there exist local minimum points $\eta_n$ of $\alpha$ with $\eta_n\to\infty$ as $n\to\infty$ such that
\beaa
\alpha(\eta_n)>0,\quad \alpha'(\eta_n)=0,\quad \alpha''(\eta_n)\geq 0\quad  \mbox{for all $n\in\mathbb{N}$.}
\eeaa
Together with \eqref{J-lower bound} and $J_1(\xi)=o(\hat W(\xi))$,
from \eqref{alpha-eq} we see that
\beaa
0=L(\eta_n)\geq \alpha(\eta_n)e^{-2\eta_n}+o(1)\alpha(\eta_n)e^{-2\eta_n}+\kappa_2e^{-2\eta_n}>0
\eeaa
for all large $n$, which reaches a contradiction. 

For case (ii),
due to \eqref{alpha-bdd}, there exists $\alpha_0\in[0, \kappa_1/\kappa_3]$
such that $\alpha(\xi)\to \alpha_0$ as $\xi\to\infty$. Hence, we can find subsequence $\{\eta_j\}$ that tends to $\infty$ such that $\alpha'(\eta_j)\to0$, $\alpha''(\eta_j)\to0$ and
$\alpha(\eta_j)\to \alpha_0$ as $n\to\infty$.
From \eqref{alpha-eq} we deduce that
\beaa
0=L(\eta_j)\geq (o(1)+\alpha(\eta_j)+ \kappa_2)e^{-2\eta_j}>0
\eeaa
for all large $j$, which reaches a contradiction. 
Therefore, we have proved that
$J_2(\xi)=o(\widehat{W})$ as $\xi\to\infty$.
Consequently, we have
\bea\label{J-small o}
J(\xi)=J_1(\xi)+J_2(\xi)=o(\widehat{W}(\xi))\quad \mbox{as $\xi\to\infty$.}
\eea

Thanks to \eqref{J-small o}, we can apply
\cite[Chapter 3, Theorem 8.1]{CoddingtonLevison} to assert that
the asymptotic behavior of $\widehat{W}(\xi)$ at $\xi=+\infty$ satisfies
\beaa
\widehat{W}(\xi)=(C_1\xi+C_2)e^{-\xi}+o(e^{-\xi})\quad \mbox{as $\xi\to\infty$},
\eeaa
where  $C_1\geq0$, and $C_2>0$ if $C_1=0$. From \eqref{W-S0+infty} and \eqref{W-S*-pm-infty}, we see that $C_1=0$, and $C_2>0$. On the other hand, $B_0=Be^{ L^*}$ implies that $C_2=0$, which reaches a contradiction.
Therefore, \eqref{th1:goal-2} holds, and the proof is complete.
\end{proof}


\section{Threshold of the nonlocal diffusion equation}\label{sec: threshold scalar nonlocal}

In this section, we aim to prove Theorem~\ref{th: threshold scalar equation nonlocal}. 
The main idea is similar to that we used for Theorem \ref{th: threshold scalar equation}. The most involved part is how to construct a suitable super-solution to get the contradiction.

\subsection{Preliminary}
We first introduce some propositions concerned with the asymptotic behavior of the minimal traveling wave of \eqref{scalar nonlocal tw-parameter s} as $\xi\to+\infty$ and $\xi\to-\infty$.
To obtain the asymptotic behavior at $\xi\to+\infty$, we will use specific linearized results established in \cite{Chen Fu Guo, Zhang_etal2012} and a modified version of Ikehara's Theorem (see Proposition 2.3 in \cite{Carr Chmaj}).
\begin{proposition}[Proposition 3.7 in \cite{Zhang_etal2012}]\label{prop:Zhang-etal}
Assume that $c>0$ and $B(\cdot)$ is a continuous function having finite limits at infinity $B(\pm \infty):=\lim_{\xi\to\pm\infty}B(\xi)$.
Let $z(\cdot)$ be a measurable function satisfying
\beaa
c z(\xi)=\int_{\mathbb{R}}J(y)e^{\int_{\xi-y}^{\xi}z(s)ds}dy+B(\xi),\quad \xi\in\mathbb{R}.
\eeaa
Then $z$ is uniformly continuous and bounded. Furthermore, $\omega^{\pm} = \lim_{\xi\to\pm\infty} z(\xi)$ exist and are real roots of the
characteristic equation
\beaa
c\omega=\int_{\mathbb{R}}J(y)e^{\omega y}dy+B(\pm\infty).
\eeaa
\end{proposition}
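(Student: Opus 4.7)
The plan is to exploit compactness of translates together with the limit structure of $B$: first bound $z$ and show it is uniformly continuous, then extract limits at $\pm\infty$ that solve autonomous versions of the equation, and finally identify each limit with a root of the characteristic equation. For the lower bound on $z$, use $J\ge 0$ to read off $cz(\xi)\ge B(\xi)\ge -\|B\|_\infty$, giving $z\ge -\|B\|_\infty/c$. The upper bound cannot be obtained just by plugging the lower bound into the exponent (that yields only a lower estimate of the integrand), so I would pass to the linearization $-c\phi'=J\ast\phi+B\phi$ coming from the substitution $\phi(\xi):=\exp(-\int_0^\xi z(s)\,ds)$; boundedness of the logarithmic derivative $-\phi'/\phi=z$ then follows from the structure of positive solutions of this linear nonlocal convolution equation (positive solutions have at most exponential growth/decay on each side, and the rate is governed by the characteristic equation).

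Once $|z|\le M$ is in hand, uniform continuity of $z$ follows directly from the equation: since $J$ has compact support $[-R,R]$, each exponent $\int_{\xi-y}^{\xi}z(s)\,ds$ changes by at most $2M|\xi_1-\xi_2|$ when $\xi$ shifts, and $B$ is uniformly continuous as a continuous function with finite limits at $\pm\infty$. Given any sequence $\xi_n\to+\infty$, the translates $z_n(\xi):=z(\xi+\xi_n)$ are then bounded and equicontinuous; Arzel\`a--Ascoli yields a subsequence converging locally uniformly to some $z_\infty$, and dominated convergence together with $B(\cdot+\xi_n)\to B(+\infty)$ lets us pass to the limit in the integral equation to obtain
\begin{equation*}
c\,z_\infty(\xi)=\int_{\mathbb{R}} J(y)\,e^{\int_{\xi-y}^{\xi}z_\infty(s)\,ds}\,dy+B(+\infty),\qquad \xi\in\mathbb{R}.
\end{equation*}

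The main obstacle is proving that every such subsequential limit $z_\infty$ is constant, which identifies it with a real root of the characteristic equation. I would substitute $\phi_\infty(\xi):=\exp(-\int_0^\xi z_\infty(s)\,ds)>0$, converting the autonomous equation for $z_\infty$ into the linear convolution ODE
\begin{equation*}
c\,\phi_\infty'(\xi)+(J\ast\phi_\infty)(\xi)+B(+\infty)\,\phi_\infty(\xi)=0.
\end{equation*}
The characteristic function $\omega\mapsto c\omega-\int_{\mathbb{R}}J(y)e^{\omega y}\,dy-B(+\infty)$ is strictly concave, hence has at most two real zeros $\omega_1<\omega_2$, and positive solutions of the linear equation are spanned by the pure exponentials $e^{-\omega_i\xi}$. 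A nontrivial mixed combination would produce a non-constant $z_\infty=-\phi_\infty'/\phi_\infty$ with distinct limits at $\pm\infty$; but a second translation-extraction can realize both $\sup z_\infty$ and $\inf z_\infty$ as subsequential limits of $z$ at $+\infty$, so $\phi_\infty$ must be a pure exponential and $z_\infty\equiv\omega^+$ for some root $\omega^+$. Independence of the subsequence then follows from uniform continuity of $z$ and the intermediate value theorem: any intermediate value between two distinct subsequential limits of $z$ at $+\infty$ would itself be a subsequential limit, and hence another root of the characteristic equation, contradicting the fact that there are at most two. The argument at $-\infty$ is identical with $B(+\infty)$ replaced by $B(-\infty)$, yielding $\omega^-$.
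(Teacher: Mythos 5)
The paper does not prove this proposition; it is imported verbatim as Proposition~3.7 of Zhang, Li, and Wang (2012) and used as a black box, so there is no in-paper argument to compare against and your sketch must stand on its own. Your skeleton (lower bound from $J\ge 0$, substitution $\phi=\exp(-\int_0^\xi z)$ to a linear nonlocal equation, Arzel\`a--Ascoli on translates to an autonomous limit equation, constancy of the limit, and the IVT step to rule out two distinct subsequential limits at $+\infty$) is the right one, and the lower bound $z\ge -\|B\|_\infty/c$, the uniform continuity once boundedness is in hand, the compactness extraction, and the IVT argument are all sound.

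The two technical cores, however, are deferred in a way that is essentially circular. For the upper bound, ``boundedness of the logarithmic derivative $-\phi'/\phi$ follows from the structure of positive solutions of this linear nonlocal convolution equation'' is exactly what has to be proved, not a fact you can invoke: a priori $\phi>0$ could drop arbitrarily steeply over a window of width $L=$ diam of $\operatorname{supp} J$, which would make $(J\ast\phi)(\xi)/\phi(\xi)$, and hence $z(\xi)=\frac{1}{c}\bigl[(J\ast\phi)(\xi)/\phi(\xi)+B(\xi)\bigr]$, unbounded. You need a genuine Harnack-type estimate here (for example, set $\psi=e^{m\xi}\phi$ with $m=-\|B\|_\infty/c$, note $\psi$ is non-increasing, and integrate the equation over length-$L$ windows to bound $\psi(\xi)/\psi(\xi+L)$ before converting back). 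For constancy of the sublimit, the assertion that positive solutions of the autonomous equation $c\phi_\infty'+J\ast\phi_\infty+B(+\infty)\phi_\infty=0$ are ``spanned by the pure exponentials $e^{-\omega_i\xi}$'' is false: this is an advanced--retarded functional differential equation whose solution space is infinite-dimensional, and positivity alone does not collapse it to a two-dimensional span of real exponentials. Your translation--extraction idea, realizing $\sup z_\infty$ and $\inf z_\infty$ again as sublimits of $z$, is the right instinct but does not close as written, since you then need those further sublimits to be constant, which is the very thing being proved. Some additional device is required at this step, e.g.\ a touching-at-the-extremum estimate played off against the strict concavity of $\omega\mapsto c\omega-\int_{\mathbb{R}}J(y)e^{\omega y}\,dy-B(+\infty)$, or a sliding/comparison argument on the limit equation. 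As written, both the upper bound and the constancy claim are gaps that carry the main weight of the proof.
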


\begin{proposition}[Ikehara's Theorem]\label{prop:ikehara}
For a positive non-increasing function $U$, we define
\begin{eqnarray*}
\dps F(\lambda):=\int_0^{+\infty} e^{-\lambda\xi} U(\xi) d\xi,\quad
\mbox{$\lambda\in\mathbb{C}$ with ${\rm Re}\lambda<0$}.
\end{eqnarray*}
If $F$ can be written as
$F(\lambda)={H(\lambda)}/{(\lambda+\gamma)^{p+1}}$
for some constants $p>-1, \gamma>0$, and some analytic function $H$
in the strip $-\gamma\leq {\rm Re}\lambda<0$, then
\begin{eqnarray*}
\lim_{\xi\rightarrow +\infty}
\frac{U(\xi)}{{\xi}^{p}e^{-\gamma\xi}}=\frac{H(-\gamma)}{\Gamma(\gamma+1)}.
\end{eqnarray*}
\end{proposition}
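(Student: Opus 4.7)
The plan is to prove this via a Tauberian argument of Wiener--Ikehara type, exploiting the hypothesized analytic continuation of $F$ across the vertical line $\mathrm{Re}\,\lambda=-\gamma$. First I would shift the spectral variable by setting $\mu=\lambda+\gamma$ and $\widetilde U(\xi):=e^{-\gamma\xi}U(\xi)$, so that the Laplace transform
\[
\widetilde F(\mu)=\int_0^{\infty}e^{-\mu\xi}\widetilde U(\xi)\,d\xi=F(\mu-\gamma)=\frac{H(\mu-\gamma)}{\mu^{p+1}}
\]
is analytic on the strip $\{0<\mathrm{Re}\,\mu<\gamma\}$ and has the normal form $\mathrm{(analytic)}/\mu^{p+1}$ at $\mu=0$. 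Since $\mu^{-(p+1)}$ is the Laplace transform of $\xi^{p}/\Gamma(p+1)$ for $p>-1$, the target asymptotic $U(\xi)\sim \frac{H(-\gamma)}{\Gamma(p+1)}\xi^{p}e^{-\gamma\xi}$ (which I believe is the intended normalization behind the stated $\Gamma(\gamma+1)$) becomes equivalent to $\widetilde U(\xi)/\xi^{p}\to H(-\gamma)/\Gamma(p+1)$.

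Next I would subtract the singular principal part. Define
\[
G(\mu):=\widetilde F(\mu)-\frac{H(-\gamma)}{\mu^{p+1}}=\frac{H(\mu-\gamma)-H(-\gamma)}{\mu^{p+1}}.
\]
Since $H$ is analytic in a neighborhood of $\mathrm{Re}\,\mu=0$, the numerator vanishes at $\mu=0$ to first order, and hence $G$ extends analytically across the imaginary axis with order of singularity at $0$ reduced by one. Iterating (or appealing directly to the analyticity of $H$) one can write $G$ as a sum of terms of the form $c_{k}\,\mu^{k-p-1}$ for $k=1,2,\ldots$ plus an analytic remainder. Inverting term by term, each correction contributes a lower-order power $\xi^{p-k}$, so the remainder is indeed $o(\xi^{p})$ provided the Fourier-type inversion on the line $\mathrm{Re}\,\mu=0$ can be justified.

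To rigorize this last step I would invoke a Karamata/Wiener Tauberian theorem, using that $U$ is positive and non-increasing so that it is of locally bounded variation; this monotonicity is crucial as it rules out oscillatory behavior of $\widetilde U$ that would be compatible with the transform data but would obstruct pointwise convergence. Concretely one writes
\[
\widetilde U(\xi)-\frac{H(-\gamma)}{\Gamma(p+1)}\,\xi^{p}=\frac{1}{2\pi i}\int_{\mathrm{Re}\,\mu=\sigma}e^{\mu\xi}G(\mu)\,d\mu
\]
for small $\sigma>0$, shifts the contour to $\mathrm{Re}\,\mu=0$ using the analytic extension of $G$, and controls the boundary integral via the Riemann--Lebesgue lemma together with the decay of $G(iy)$ in $y$ coming from smoothness of $H$ and the factor $|y|^{-(p+1)}$ (for large $|y|$ one truncates and applies the monotonicity of $U$ to dominate the oscillatory tails).

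The main obstacle, in my view, is not the formal calculation but the technical justification of the contour shift and the decay of $G$ along the imaginary axis for general $p>-1$: one has to choose a consistent branch of $\mu^{-(p+1)}$, verify integrability of $G(iy)e^{iy\xi}$ against the monotone function $U$ in the Wiener-Tauberian sense, and handle the possible slow decay of $H(iy-\gamma)$ near infinity. This is the step where the monotonicity hypothesis and the precise meromorphic structure of $F$ (a single pole factor $(\lambda+\gamma)^{p+1}$ rather than an essential singularity) enter in an essential way, and it is why the result requires the full machinery of a modified Ikehara theorem rather than a direct residue computation.
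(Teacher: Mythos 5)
The paper offers no proof of this proposition: it is invoked verbatim as Proposition~2.3 of Carr--Chmaj, so there is no in-paper argument to compare against. Your observation that the displayed normalization $\Gamma(\gamma+1)$ should read $\Gamma(p+1)$ is correct (the Laplace transform of $\xi^{p}$ is $\Gamma(p+1)/\mu^{p+1}$, so matching singular parts forces $C=H(-\gamma)/\Gamma(p+1)$); the statement as printed contains a typo inherited from, or introduced in transcribing, the source.

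That said, your sketch has a genuine gap precisely where you flag ``the main obstacle,'' and the way you propose to close it would not work. Subtracting the principal part and writing $\widetilde U(\xi)-\tfrac{H(-\gamma)}{\Gamma(p+1)}\xi^{p}$ as a Bromwich integral of $G$, then ``shifting the contour to $\mathrm{Re}\,\mu=0$ and applying Riemann--Lebesgue,'' is not a valid plan: nothing in the hypotheses makes $G(iy)$ integrable on the line, nor bounds the growth of $\widetilde F$ in the strip needed to justify the shift, and the monotonicity of $U$ does not ``dominate the oscillatory tails'' in any elementary way. The Wiener--Ikehara mechanism that is actually being cited does something structurally different: it never performs a global inversion. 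One first tests $\widetilde F$ against a family of band-limited kernels (Fej\'er kernels $K_\Lambda$), for which the inversion integral is over a compact interval and the boundary values of $G$ are used only on that interval, where analyticity up to $\mathrm{Re}\,\mu=0$ suffices. Positivity of $U$ then gives a one-sided (averaged) asymptotic, and the non-increasing hypothesis is what converts this Ces\`aro-type information into the pointwise limit. Your proposal omits the smoothing/localization step entirely, which is the core of the theorem and the reason the monotonicity enters; without it, the claim that the remainder is $o(\xi^{p})$ is unsubstantiated. In short, your formal decomposition and change of variables are sound and your identification of where the difficulty lies is accurate, but the mechanism you suggest for resolving that difficulty is not the one that works, and supplying the correct Tauberian argument is exactly the content being delegated to the cited reference.
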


\begin{proposition}\label{prop:correction-U-linear-decay}
Assume that $c=c_{NL}^*(q)=c^*_0$. Let $\lambda_0$ be defined as that in Remark~\ref{rm:lambda_0}.  Then the minimal traveling wave $\mathcal{W}_q(\xi)$ satisfies
\bea\label{decay-U-linear}
\mathcal{W}_q(\xi)=A\xi e^{-\lambda_0\xi}+Be^{-\lambda_0\xi}+o(e^{-\lambda_0\xi})\quad \mbox{as $\xi\to+\infty$},
\eea
where $A\geq0$ and $B\in \mathbb{R}$,
and $B>0$ if $A=0$.
\end{proposition}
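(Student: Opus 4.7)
The plan is to revisit the Ikehara-type argument of \cite{Carr Chmaj}, tracking both possible orders of the pole at the critical exponent rather than assuming (as \cite{Coville} implicitly did) that the leading coefficient is nonzero. The proof splits into three main steps.

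Step~1 (rough exponential rate). I would apply Proposition~\ref{prop:Zhang-etal} to the logarithmic derivative $z(\xi):=-\mathcal{W}_q'(\xi)/\mathcal{W}_q(\xi)>0$. Dividing \eqref{scalar nonlocal tw-parameter s} by $\mathcal{W}_q$ puts it in the integral form required by that proposition, with $B(\xi):=f(\mathcal{W}_q(\xi);q)/\mathcal{W}_q(\xi)-1$, which has limits $f'(0;q)-1$ at $+\infty$ and $-1$ at $-\infty$. Since $c=c_0^*$, the only positive root of $c\omega=h(\omega)$ is $\omega=\lambda_0$ (a double root), so $z(\xi)\to\lambda_0$ as $\xi\to+\infty$. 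In particular $\mathcal{W}_q(\xi)=O(e^{-(\lambda_0-\epsilon)\xi})$ for every $\epsilon>0$.

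Step~2 (Laplace transform and pole order). Writing \eqref{scalar nonlocal tw-parameter s} as $\mathcal{L}\mathcal{W}_q=Q(\mathcal{W}_q)$ with $\mathcal{L}u:=J\ast u-u+cu'+f'(0;q)u$ the linearization at $0$ and $Q(w):=f'(0;q)w-f(w;q)=O(w^2)$, I would consider
\[
F(\lambda):=\int_0^\infty e^{-\lambda\xi}\mathcal{W}_q(\xi)\,d\xi,
\]
analytic on $\{\mathrm{Re}\,\lambda>-\lambda_0\}$ by Step~1. Using the symmetry and compact support of $J$ and integration by parts, one obtains an identity of the form $[h(\lambda)+c\lambda]F(\lambda)=\Phi(\lambda)$ where $\Phi$ is analytic on a strip strictly containing $\mathrm{Re}\,\lambda=-\lambda_0$ (the $O(\mathcal{W}_q^2)$ decay of $Q(\mathcal{W}_q)$ pushes its Laplace transform further left, and the boundary terms are entire). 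Because $h$ is even and $h'(\lambda_0)=c_0^*$ at the critical speed, the factor $h(\lambda)+c\lambda$ vanishes to order exactly two at $\lambda=-\lambda_0$, so $F(\lambda)=H(\lambda)/(\lambda+\lambda_0)^2$ with $H$ analytic in a neighborhood of $-\lambda_0$.

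Step~3 (Ikehara dichotomy). I would then apply Proposition~\ref{prop:ikehara} to the positive non-increasing function $\mathcal{W}_q$. If $H(-\lambda_0)\neq 0$, the theorem with $p=1$ and $\gamma=\lambda_0$ yields $\mathcal{W}_q(\xi)\sim A\xi e^{-\lambda_0\xi}$ with $A=H(-\lambda_0)/\Gamma(\lambda_0+1)$, and positivity of $\mathcal{W}_q$ forces $A>0$. If instead $H(-\lambda_0)=0$, factor $H(\lambda)=(\lambda+\lambda_0)H_1(\lambda)$ and reapply Proposition~\ref{prop:ikehara} with $p=0$, giving $\mathcal{W}_q(\xi)\sim Be^{-\lambda_0\xi}$ with $B=H_1(-\lambda_0)/\Gamma(\lambda_0+1)\geq 0$. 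The degenerate case $B=0$ would force $\mathcal{W}_q=o(e^{-\lambda_0\xi})$; this is excluded by a standard observation that at the critical speed $c_0^*$ the minimal wave cannot decay strictly faster than $e^{-\lambda_0\xi}$, obtained by comparison with the exponential solution $e^{-\lambda_0\xi}$ of the linearized equation.

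The hard part is Step~2: carefully identifying the boundary contributions from integration by parts together with the nonlocal convolution, so that the resulting $\Phi$ is analytic on a strip extending past $\mathrm{Re}\,\lambda=-\lambda_0$, and verifying that $h(\lambda)+c\lambda$ has a zero of order exactly two (order at least two follows from the critical-speed condition $h'(\lambda_0)=c_0^*$, and the absence of a higher order zero from the strict convexity of $h$). Once this rational structure of $F$ is in place, the dichotomy $H(-\lambda_0)\neq 0$ versus $H(-\lambda_0)=0$ handles the two asymptotic regimes, and positivity plus the no-faster-decay observation close the proof.
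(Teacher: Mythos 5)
Your overall plan matches the paper's own proof almost step for step: the logarithmic-derivative argument via Proposition~\ref{prop:Zhang-etal} to fix the exponential rate at $\lambda_0$; the Laplace transform $\mathcal{F}(\lambda)=Q(\lambda)/(c\lambda+h(\lambda))-\int_{-\infty}^0 e^{-\lambda\xi}\mathcal{W}\,d\xi$ with $Q$ analytic on a strip extending past $-\lambda_0$ because of the $O(\mathcal{W}^2)$ nonlinearity; the double zero of $c\lambda+h(\lambda)$ at $-\lambda_0$; and the Ikehara dichotomy $p=1$ vs.\ $p=0$ according to whether the numerator vanishes at $-\lambda_0$.

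The one place where your write-up is weaker than the paper is the exclusion of the degenerate case $A=B=0$. You assert that $\mathcal{W}_q=o(e^{-\lambda_0\xi})$ is ruled out ``by comparison with the exponential solution $e^{-\lambda_0\xi}$ of the linearized equation.'' In the nonlocal setting this is not a standard, off-the-shelf comparison, and more to the point it does not do the job: Step~1 only pins down the \emph{exponential} rate, so $\mathcal{W}_q(\xi)\sim e^{-\lambda_0\xi}/\xi$ (say) is still $o(e^{-\lambda_0\xi})$ yet compatible with $z(\xi)\to\lambda_0$; a pointwise comparison with $e^{-\lambda_0\xi}$ cannot discriminate. The paper closes this gap by a Laplace-transform argument instead: if $Q$ vanished to order two at $-\lambda_0$, then $\mathcal{F}$ would have a removable singularity there and would extend analytically to $\{-\lambda_0-\epsilon\le\mathrm{Re}\,\lambda<0\}$; but since $\mathcal{W}>0$ and $z(\xi)\to\lambda_0$, the integral defining $\mathcal{F}$ diverges for $\mathrm{Re}\,\lambda<-\lambda_0$, so no such extension can exist. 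That is the missing ingredient; with it your proof aligns fully with the paper's.
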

\begin{proof}
For convenience, we write $\mathcal{W}$ instead of $\mathcal{W}_q(\xi)$.
Let $z(\xi):=-\mathcal{W}'(\xi)/\mathcal{W}(\xi)$. Then, from \eqref{scalar nonlocal tw-parameter s} we have
\beaa
cz(\xi)=\int_{\mathbb{R}}J(y)e^{\int_{\xi-y}^{\xi}z(s)ds}dy+B(\xi),
\eeaa
where $B(\xi)=f(\mathcal{W})/\mathcal{W}-1$.
Since $\mathcal{W}(+\infty)=0$, we have $B(+\infty)=f'(0)-1$.
It follows from Proposition~\ref{prop:Zhang-etal} and Remark~\ref{rm:lambda_0} that
\bea\label{limit-z}
\lim_{\xi\to+\infty}\frac{\mathcal{W}'(\xi)}{\mathcal{W}(\xi)}=-\lim_{\xi\to+\infty} z(\xi)=-\lambda_0.
\eea

With \eqref{limit-z}, we can correct the proof of \cite[Theorem 1.6]{Coville} and obtain the desired result. To see this, we set 
\bea\label{mathcal F}
\mathcal{F}(\lambda)=\int_0^{\infty}\mathcal{W}(\xi) e^{-\lambda \xi}d\xi.
\eea
Because of \eqref{limit-z},
$\mathcal{F}$ is well-defined for $\lambda\in\mathbb{C}$ with $-\lambda_0<{\rm Re}\lambda<0$.
From \eqref{scalar nonlocal tw-parameter s}, we can rewrite it as
\beaa
(c\lambda+h(\lambda))\int_{\mathbb{R}}\mathcal{W}(\xi)e^{-\lambda \xi}d\xi=
\int_{\mathbb{R}} e^{-\lambda \xi}[f'(0)\mathcal{W}(\xi)-f(\mathcal{W}(\xi))]d\xi=:Q(\lambda),
\eeaa
where $h(\lambda)=h(-\lambda)$ is defined in Remark~\ref{rm:lambda_0}.
Moreover, we see that $Q(\lambda)$ is well-defined for $\lambda\in\mathbb{C}$ with
$-2\lambda_0<{\rm Re} \lambda<0$
since
$$f(w)=f'(0)w+O(w^2)\ \ \text{as}\ \ w\to0.$$
Then, we have
\bea\label{F-form-nonlocal}
\mathcal{F}(\lambda)=\frac{Q(\lambda)}{c\lambda+h(\lambda)}-\int_{-\infty}^{0}\mathcal{W}(\xi)e^{-\lambda \xi}d\xi,
\eea
as long as $\mathcal{F}(\lambda)$ is well-defined.

To apply Ikehara's Theorem (Proposition~\ref{prop:ikehara}), we
rewrite \eqref{F-form-nonlocal} as
\beaa
\mathcal{F}(\lambda)=\frac{H(\lambda)}{(\lambda+\lambda_0)^{p+1}},
\eeaa
where $p\in\mathbb{N}\cup\{0\}$ and
\bea\label{Ikehara-form-nonlocal}
H(\lambda)=\frac{Q(\lambda)}{(c\lambda+h(\lambda))/(\lambda+\lambda_0)^{p+1}}
-(\lambda+\lambda_0)^{p+1}\int_{-\infty}^{0}e^{-\lambda \xi}\mathcal{W}(\xi) d\xi.
\eea
It is well known from (cf. \cite[p.2437]{Carr Chmaj}) that all roots of $c\lambda+h(\lambda)=0$ must be real.  Together with the assumption $c_{NL}^*=c^*_0$ and Remark~\ref{rm:lambda_0},
we see that $\lambda = -\lambda_0$ is the only (double) root of $c\lambda+h(\lambda)=0$.

Next, we will show $H$ is analytic in the strip $\{-\lambda_0\leq  {\rm Re}\lambda<0\}$ and $H(-\lambda_0)\neq 0$ with some $p\in\mathbb{N}\cup\{0\}$.
Note that the second term on the right-hand side of \eqref{Ikehara-form-nonlocal} is analytic on $\{{\rm Re}\lambda<0\}$. 
Consequently, it is enough to deal with the first term. 
\begin{itemize}
    \item[(i)] Assume that
$Q(-\lambda_0)\neq 0$. Then
 by setting $p=1$, we obtain $H(-\lambda_0)\neq 0$ (since $c\lambda+h(\lambda)=0$ has the double root $\lambda_0$), and thus
\beaa
\lim_{\xi\to+\infty}\frac{\mathcal{W}(\xi)}{\xi e^{-\lambda_0\xi}}=C_1
\eeaa
for some $C_1>0$ by Ikehara's Theorem (Proposition~\ref{prop:ikehara}).
\item[(ii)] Assume that
$Q(-\lambda_0)=0$. This means that $\lambda=-\lambda_0$ is a root of $Q(\lambda)$.
One can observe from \eqref{F-form-nonlocal} that the root $\lambda=-\lambda_0$ of $Q$ must be simple; otherwise, $\mathcal{F}(\lambda)$ has a removable singularity at $\lambda=-\lambda_0$ and thus can
be extended to exist over $\{ -\lambda_0-\epsilon\leq  {\rm Re} \lambda<0\}$
for some $\epsilon>0$. However, by \eqref{limit-z} and \eqref{mathcal F}, we see that $\mathcal{F}(\lambda)$ is divergent for
$\lambda$ with ${\rm Re} \lambda<-\lambda_0$,
which leads to a contradiction. Therefore, $\lambda=-\lambda_0$ is a simple root of $Q$.
By taking $p=0$ in \eqref{Ikehara-form-nonlocal}, we obtain $H(\lambda_0)\neq 0$, and thus
\beaa
\lim_{\xi\to+\infty}\frac{\mathcal{W}(\xi)}{e^{-\lambda_0\xi}}=C_2
\eeaa
for some $C_2>0$ by Ikehara's Theorem (Proposition~\ref{prop:ikehara}).
\end{itemize}
As a result, we obtain \eqref{decay-U-linear} in which $A$ and $B$ cannot be equal to $0$ at the same time. 
\end{proof}

The third proposition provides the asymptotic behavior of the minimal traveling wave as $\xi\to-\infty$,
\begin{proposition}\label{prop: asy tw - infty nonlocal}
Let $\mathcal{W}_{q,c}$ be the traveling wave satisfying \eqref{scalar nonlocal tw-parameter s} with speed $c\ge c^*_0$ and $q\ge 0$.
We define $\mu_{q,c}$ as the unique positive root of
\begin{equation}\label{nonlocal root}
c\mu=I_1(\mu):=\int_{\mathbb{R}}J(y)e^{\mu y}dy+f'(1;q)-1.
\end{equation}
Then it holds
\begin{equation*}
1-\mathcal{W}_{q,c}(\xi)=O(e^{\mu_{q,c}\xi})\quad\text{as}\quad \xi\to-\infty.
\end{equation*}
\end{proposition}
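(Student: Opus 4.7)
The plan is to mirror the logarithmic-derivative technique used in the proof of Proposition~\ref{prop:correction-U-linear-decay}, but applied at the opposite spatial infinity (as $\xi \to -\infty$) and to the quantity $V := 1 - \mathcal{W}_{q,c}$ rather than $\mathcal{W}_{q,c}$ itself. The central tool is again Proposition~\ref{prop:Zhang-etal}.

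First, I would substitute $\mathcal{W}_{q,c} = 1 - V$ into \eqref{scalar nonlocal tw-parameter s} and simplify using $J\ast 1 = 1$ and $f(1;q) = 0$ to obtain
$$cV'(\xi) = V(\xi) - (J\ast V)(\xi) + f(1-V(\xi);q), \qquad \xi \in \mathbb{R},$$
with $V(-\infty) = 0$, $V(+\infty) = 1$, and $V' > 0$. Because $V > 0$ throughout, the function $z(\xi) := -V'(\xi)/V(\xi)$ is well defined and negative. Using $V(\xi-y)/V(\xi) = \exp\!\bigl(\int_{\xi-y}^{\xi} z(s)\,ds\bigr)$ and dividing the above equation by $V(\xi)$ yields
$$c\,z(\xi) = \int_{\mathbb{R}} J(y) \exp\!\left(\int_{\xi-y}^{\xi} z(s)\,ds\right) dy + B(\xi),$$
where $B(\xi) := -1 - f(1-V(\xi);q)/V(\xi)$. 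A Taylor expansion of $f$ at $w=1$ (using $f(1;q)=0$) gives $B(-\infty) = f'(1;q) - 1$ and $B(+\infty) = -1$, so $B$ is continuous with finite limits at $\pm\infty$, matching the hypotheses of Proposition~\ref{prop:Zhang-etal} exactly.

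Proposition~\ref{prop:Zhang-etal} then delivers that $z$ is uniformly bounded and continuous on $\mathbb{R}$, and that $\omega^- := \lim_{\xi \to -\infty} z(\xi)$ exists and is a real root of the characteristic equation $c\omega = \int_{\mathbb{R}} J(y) e^{\omega y}\,dy + f'(1;q) - 1$. A convexity-and-symmetry analysis (the right-hand side is even in $\omega$, strictly convex, strictly negative at $\omega = 0$, and blows up at $\pm\infty$) shows this equation admits exactly one negative and one positive real root. Since $z < 0$ throughout and the boundary condition $V(-\infty) = 0$ excludes $\omega^- = 0$, the limit $\omega^-$ must be the negative root; its absolute value is then identified with the positive exponent $\mu_{q,c}$ featured in the statement (via evenness of the transform of $J$).

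Finally, the limit $z(\xi) \to \omega^-$ is upgraded to the pointwise decay estimate for $V$ by straightforward integration. For any small $\varepsilon \in (0, \mu_{q,c})$, pick $M > 0$ so that $|z(\xi) - \omega^-| < \varepsilon$ for all $\xi \le -M$; integrating $(\ln V)' = -z$ from $\xi$ to $-M$ gives
$$\ln V(-M) - \ln V(\xi) = -\int_{\xi}^{-M} z(s)\,ds,$$
hence $V(\xi) \le C_\varepsilon\, e^{(\mu_{q,c} - \varepsilon)\xi}$ for $\xi \le -M$, which yields $1 - \mathcal{W}_{q,c}(\xi) = O(e^{\mu_{q,c}\xi})$ as $\xi \to -\infty$. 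The main technical obstacle is the middle step: Proposition~\ref{prop:Zhang-etal} only guarantees that $\omega^-$ is \emph{some} real root of the characteristic equation, and one must exploit both the sign constraint $z < 0$ (forced by monotonicity and positivity of $V$) and the boundary behavior $V(-\infty) = 0$ to pick out the correct root matching $\mu_{q,c}$.
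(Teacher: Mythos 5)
Your argument is a genuinely different route from the paper's: whereas the paper sketches a Laplace--transform argument modeled on Coville's Theorem~1.6 and otherwise defers to that reference, you transplant the logarithmic-derivative method of Proposition~\ref{prop:correction-U-linear-decay} to the opposite end $\xi\to-\infty$, invoking Proposition~\ref{prop:Zhang-etal} directly for $z=-V'/V$ with $V=1-\mathcal{W}_{q,c}$. Up to the point where you conclude that $\omega^-:=\lim_{\xi\to-\infty}z(\xi)$ exists and is the unique \emph{negative} root of $c\omega=\int_{\mathbb R}J(y)e^{\omega y}\,dy+f'(1;q)-1$, the derivation is correct (the computation of $B(\pm\infty)$ is right, and the sign constraint $z<0$ together with $I_1(0)=f'(1;q)\neq 0$ does force $\omega^-<0$).

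The gap is in the final identification. You assert $|\omega^-|=\mu_{q,c}$ ``via evenness of the transform of $J$,'' but this does not follow: although $\int_{\mathbb R}J(y)e^{\omega y}\,dy$ is indeed even in $\omega$, the full characteristic relation $c\omega=I_1(\omega)$ is \emph{not} invariant under $\omega\mapsto-\omega$ because of the odd term $c\omega$. Concretely, if $\mu^+=\mu_{q,c}$ denotes the unique positive root, then
\begin{equation*}
I_1(-\mu^+)-c(-\mu^+)=I_1(\mu^+)+c\mu^+=2c\mu^+>0,
\end{equation*}
so $-\mu^+$ is not a root of $c\omega=I_1(\omega)$; since $\phi(\omega):=I_1(\omega)-c\omega$ is strictly convex with $\phi(0)<0$, this forces $-\mu^+<\mu^-$, i.e.\ $|\omega^-|=|\mu^-|<\mu^+=\mu_{q,c}$. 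So what your method actually yields is a decay rate equal to $|\omega^-|$, the modulus of the \emph{negative} root of \eqref{nonlocal root}, which is strictly smaller than the $\mu_{q,c}$ named in the statement. Equivalently, by the change of variable $\mu\mapsto-\mu$, the exponent your argument produces is the unique positive root of $c\mu=1-f'(1;q)-\int_{\mathbb R}J(y)e^{\mu y}\,dy$; you should reconcile this with \eqref{nonlocal root} rather than appeal to a symmetry that is not present. A secondary, more minor, issue: integrating $|z-\omega^-|<\varepsilon$ on $(-\infty,-M]$ only gives $V(\xi)\le C_\varepsilon e^{(|\omega^-|-\varepsilon)\xi}$ for each $\varepsilon>0$; upgrading this family of near-optimal bounds to the sharp $O(e^{|\omega^-|\xi})$ requires an extra step (e.g.\ the Ikehara-type argument the paper gestures at), since $z(\xi)\to\omega^-$ alone is compatible with a subexponential correction such as $|\xi|e^{|\omega^-|\xi}$.
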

By linearizing the equation of \eqref{scalar nonlocal tw-parameter s} near $\mathcal{W}=1$ and changing $1-\mathcal{W}=\hat{\mathcal{W}}$, we have
$$J\ast\hat{\mathcal{W}}-\hat{\mathcal{W}}+c\hat{\mathcal{W}}'+f'(1;q)\hat{\mathcal{W}}=0.$$
Define
$I_2(\mu)=\int_{\mathbb{R}}\hat{\mathcal{W}}e^{\mu\xi}d\xi.$
Then, by multiplying $e^{\mu \xi}$ and integral on $\mathbb{R}$, we obtain
$$I_2(\mu)\Big(1-f'(1;q)+\mu c-\int_{\mathbb{R}}J(y)e^{\mu y}dy\Big)=0.$$
Notice that, $I_1(\mu)$ is a convex function.
Since $\int_{\mathbb{R}}J(y)e^{\mu y}dy=1$ when $\mu=0$, $\int_{\mathbb{R}}J(y)e^{\mu y}dy\to \infty$ as $\mu\to\infty$, and $f'(1;q)<0$, \eqref{nonlocal root} admits the unique positive root. Then, the proof of Proposition \ref{prop: asy tw - infty nonlocal} follows from the similar argument as Theorem 1.6 in \cite{Coville}.

\subsection{Construction of the super-solution}\label{subsec-3-1}

\noindent

Under the assumption (A1) and \eqref{assumption on J}, from Theorem 1.6 in \cite{Coville}, for each $q\geq0$, there exists a unique minimal traveling wave(up to a translation), and the minimal speed $c_{NL}^*(q)$ is continuous for all $q\ge 0$ by the assumption (A2).
Moreover, it follows from the assumption (A3) that $c_{NL}^*(q)$ is nondecreasing on $q$. Thus, we immediately obtain the following result by assumptions (A6),(A7), and Remark~\ref{rk:a6+a7}.

\begin{lemma}\label{lem: th1-part1 nonlocal}
Assume that assumptions (A1)-(A3), (A6), and (A7) hold. Then there exists a threshold $q^*\in[q_1,q_2)$ such that \eqref{def of threshold scalar nonlocal} holds.
\end{lemma}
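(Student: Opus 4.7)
The plan is to mirror exactly the short argument used for Lemma~\ref{lem: th1-part1} in the local case, since the paragraph immediately preceding Lemma~\ref{lem: th1-part1 nonlocal} already supplies the three main ingredients: (a) existence and uniqueness of a minimal traveling wave from Coville's Theorem~1.6, (b) continuity of $q\mapsto c_{NL}^*(q)$ via assumption (A2) and a standard compactness argument on traveling wave profiles, and (c) monotonicity of $q\mapsto c_{NL}^*(q)$ via assumption (A3) and a direct comparison between sub- and super-solutions.

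Given these, I would define the threshold as
$$
q^* := \sup\bigl\{\,q\ge 0 : c_{NL}^*(q)=c_0^*\,\bigr\},
$$
and then verify the three claims: (i) $q^*\in [q_1,q_2)$, (ii) $c_{NL}^*(q)=c_0^*$ for every $q\in[0,q^*]$, and (iii) $c_{NL}^*(q)>c_0^*$ for every $q>q^*$. For (i), the lower bound $q^*\ge q_1$ is immediate from assumption (A6) and Remark~\ref{rk:a6+a7}, which already guarantee $c_{NL}^*(q)=c_0^*$ on $[0,q_1]$. For the strict upper bound, assumption (A7) combined with Remark~\ref{rk:a6+a7} forces $c_{NL}^*(q)>c_0^*$ on $[q_2,\infty)$, so the supremum cannot reach $q_2$; thus $q^*<q_2$.

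For (iii), any $q>q^*$ lies outside the set over which the supremum is taken, so $c_{NL}^*(q)\ne c_0^*$, and since the lower bound $c_{NL}^*(q)\ge c_0^*$ always holds by the variational formula \eqref{formula of c_NL}, we must have $c_{NL}^*(q)>c_0^*$. For (ii), by the definition of supremum I can pick a sequence $q_n\to q^*$ with $c_{NL}^*(q_n)=c_0^*$; continuity of $c_{NL}^*$ passes to the limit to give $c_{NL}^*(q^*)=c_0^*$, and then monotonicity together with the lower bound $c_{NL}^*\ge c_0^*$ pins down $c_{NL}^*(q)=c_0^*$ throughout $[0,q^*]$.

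There is no real obstacle once one accepts the continuity statement asserted above the lemma; the only part worth spelling out in the formal proof is that continuity, which follows from a standard Arzel\`a--Ascoli / bootstrap argument on a sequence $(c_{NL}^*(q_n),\mathcal{W}_{q_n})$ combined with the Lipschitz dependence of $f(\cdot;q)$ on $q$ provided by (A2). Everything else is a routine sup/monotonicity bookkeeping, directly parallel to the local case treated in Lemma~\ref{lem: th1-part1}.
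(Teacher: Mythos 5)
Your argument matches the paper's own reasoning for this lemma: establish continuity of $q\mapsto c_{NL}^*(q)$ via (A2) and compactness, monotonicity via (A3) and comparison, invoke the lower bound $c_{NL}^*\ge c_0^*$, and then the threshold follows immediately from (A6)--(A7) and Remark~\ref{rk:a6+a7}. Your write-up simply spells out the sup/monotonicity bookkeeping that the paper leaves implicit.
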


Thanks to Lemma~\ref{lem: th1-part1 nonlocal}, to prove Theorem~\ref{th: threshold scalar equation nonlocal}, it suffices to show that
\eqref{asy tw threshold scalar nonlocal} holds if and only if $q=q^*$.
Let $\mathcal{W}_{q^*}$ be the minimal traveling wave of \eqref{scalar nonlocal tw-parameter s} with $q=q^*$
and 
speed $c_{NL}^*(q^*)=c_0^*$ defined as \eqref{formula of c_NL}. For simplicity, we denote $\mathcal{W}_*:=\mathcal{W}_{q^*}$.
Similar as the proof of Theorem \ref{th: threshold scalar equation}, the first and the most involved step is to show that if $q=q^*$, then \eqref{asy tw threshold scalar nonlocal} holds.
To do this, we shall use the contradiction argument again.
Assume that \eqref{asy tw threshold scalar nonlocal} is not true. Then, from \eqref{decay-U-linear}  
it holds that
\bea\label{AS-U-infty-for-contradiction scalar nonlocal}
\lim_{\xi\rightarrow+\infty}\frac{\mathcal{W}_{*}(\xi)}{\xi e^{-\lambda_0\xi}}=A_0\quad \mbox{for some $A_0>0$,}
\eea
where $\lambda_0$ is defined in Remark \ref{rk:a6+a7}.

Under the condition \eqref{AS-U-infty-for-contradiction scalar nonlocal},
we shall prove the following proposition.

\begin{proposition}\label{Prop-supersol nonlocal}
Assume that assumptions (A1)-(A3), (A6), and (A7) hold. In addition, if \eqref{AS-U-infty-for-contradiction scalar nonlocal} holds,
then there exists an auxiliary continuous function $\mathcal{R}_w(\xi)$ defined in $\mathbb{R}$ satisfying
\begin{equation}\label{decay rate of Rw nonlocal}
\mathcal{R}_w(\xi)=O(\xi e^{-\lambda_0\xi}) \quad \mbox{as $\xi\to\infty$},
\end{equation}
such that $\bar{\mathcal{W}}(\xi):=\min\{\mathcal{W}_{*}(\xi)-\mathcal{R}_w(\xi),1\}\geq (\not\equiv) 0$
satisfies
\bea\label{tw super solution scalar nonlocal}
\mathcal{N}_0[\bar{\mathcal{W}}]:=J\ast\bar{\mathcal{W}}-\bar{\mathcal{W}}+c^*_0\bar{\mathcal{W}}'+f(\bar{\mathcal{W}};q^*+\delta_0)\le0,\quad  \mbox{a.e. in $\mathbb{R}$},
\eea
for all sufficiently small $\delta_0>0$, where
${\bar{\mathcal{W}}}'(\xi_0^{\pm})$ exists and
${\bar{\mathcal{W}}}'(\xi_0^+)\leq {\bar{\mathcal{W}}}'(\xi_0^-)$
if $\bar{\mathcal{W}}'$ is not continuous at $\xi_0$.
\end{proposition}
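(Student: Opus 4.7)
The plan is to follow the same four-piece construction as in Proposition~\ref{Prop-supersol}, replacing the critical local rate $e^{-\xi}$ by $e^{-\lambda_0\xi}$, where $\lambda_0$ is the double root of $c_0^*\lambda=h(\lambda)$ (see Remark~\ref{rm:lambda_0}). Concretely, I would define
\begin{equation*}
\mathcal{R}_w(\xi)=\begin{cases}
\varepsilon_1\sigma(\xi)\,e^{-\lambda_0\xi}, & \xi\ge\xi_{1}+\delta_1,\\
\varepsilon_2\, e^{\lambda_1\xi}, & \xi_{2}+\delta_2\le\xi\le\xi_{1}+\delta_1,\\
\varepsilon_3\sin(\delta_4(\xi-\xi_2)), & \xi_2-\delta_3\le\xi\le\xi_2+\delta_2,\\
-\varepsilon_4\, e^{\lambda_2\xi}, & \xi\le\xi_2-\delta_3,
\end{cases}
\end{equation*}
with $\sigma(\xi)=4e^{-(\xi-\xi_1)/2}-4+4\xi-4\xi_1$ as in Section~\ref{subsec-2-1}, taking $|\xi_1|,|\xi_2|$ very large and $\delta_i,\varepsilon_i$ small. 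The $\varepsilon_i$ are tuned so that $\mathcal{R}_w$ is continuous, the $\delta_i,\lambda_j$ so that the corner-angle conditions forcing $\bar{\mathcal{W}}'(\xi_0^+)\le\bar{\mathcal{W}}'(\xi_0^-)$ at each junction hold, and so that $\mathcal{R}_w\ll\mathcal{W}_*$ pointwise, ensuring $\bar{\mathcal{W}}\ge(\not\equiv)0$ and \eqref{decay rate of Rw nonlocal}. Using the equation of $\mathcal{W}_*$, one rewrites
\begin{equation*}
\mathcal{N}_0[\bar{\mathcal{W}}]=-\bigl(J\ast\mathcal{R}_w-\mathcal{R}_w+c_0^*\mathcal{R}_w'\bigr)+\bigl(f(\mathcal{W}_*-\mathcal{R}_w;q^*+\delta_0)-f(\mathcal{W}_*;q^*)\bigr),
\end{equation*}
and splits the reaction difference, via (A1)--(A2), into a linear $-f'(0;q^*)\mathcal{R}_w$ term, a $\pm K_2|\mathcal{R}_w|$ remainder, an $L_0\delta_0$ shift, and in the far-right region an $O(\mathcal{W}_*\mathcal{R}_w+\mathcal{W}_*^2+\mathcal{R}_w^2)$ quadratic correction.

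The region-by-region sign check uses specific identities for $h$. For the far-right piece, the double-root identities $h(\lambda_0)=c_0^*\lambda_0$ and $h'(\lambda_0)=c_0^*$ (reflecting that $\lambda_0$ minimizes $h(\lambda)/\lambda$), combined with the symmetry of $J$ and the Taylor expansion $\sigma(\xi-y)=\sigma(\xi)-y\sigma'(\xi)+\tfrac12y^2\sigma''(\eta)$, yield
\begin{equation*}
\bigl(J\ast(\sigma e^{-\lambda_0\cdot})-\sigma e^{-\lambda_0\cdot}+c_0^*(\sigma e^{-\lambda_0\cdot})'+f'(0;q^*)\sigma e^{-\lambda_0\cdot}\bigr)(\xi)=\tfrac{e^{-\lambda_0\xi}}{2}\int_{\mathbb{R}}J(y)\,y^2\sigma''(\eta(\xi,y))\,e^{\lambda_0 y}\,dy,
\end{equation*}
which is positive of order $e^{-\lambda_0\xi-(\xi-\xi_1)/2}$ and dominates both the quadratic corrections (they are $o(e^{-3\lambda_0\xi/2})$ by \eqref{AS-U-infty-for-contradiction scalar nonlocal}) and the $L_0\delta_0$ term. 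For the middle exponential, $J\ast e^{\lambda_1\cdot}-e^{\lambda_1\cdot}+c_0^*(e^{\lambda_1\cdot})'=(h(\lambda_1)+c_0^*\lambda_1-f'(0;q^*))e^{\lambda_1\xi}$, which is made much larger than $K_2$ by taking $\lambda_1$ large (using superlinear growth of $h$). For the sine piece, symmetry of $J$ gives $J\ast\sin(\delta_4(\cdot-\xi_2))=\sin(\delta_4(\xi-\xi_2))\int J(y)\cos(\delta_4 y)dy$, so the leading negative contribution is the advection $c_0^*\varepsilon_3\delta_4\cos(\delta_4(\xi-\xi_2))$, and the choices $\delta_2\in(1/\lambda_1,c_0^*/K_2)$ with $\delta_4$ small parallel Claim~\ref{cl scalar}. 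For the far-left piece, a nonlocal analogue of Lemma~\ref{lm: divide R to 3 parts} together with Proposition~\ref{prop: asy tw - infty nonlocal} yields $f'(\mathcal{W}_*;q^*)\le-K_3$ on $(-\infty,\xi_2]$, which outweighs $(h(\lambda_2)+c_0^*\lambda_2-f'(0;q^*))e^{\lambda_2\xi}$ once $\lambda_2<\mu_{q^*}$ is sufficiently small; for $\xi\le-M$ with $M$ large, $\bar{\mathcal{W}}\equiv 1$ trivializes the inequality.

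The principal obstacle is the \emph{nonlocality at interfaces}: because $J\ast\mathcal{R}_w(\xi)$ depends on all values in $[\xi-R,\xi+R]$ with $R=\max\mathrm{supp}(J)$, within a boundary layer of width $R$ about each junction the convolution mixes two adjacent pieces, so the clean single-piece identities above acquire correction terms. I would handle this by choosing $|\xi_1|,|\xi_2|$ and every $\delta_i$ much larger than $R$, so that each piece has a core where the single-piece estimate applies verbatim. In the $R$-thick transition layers, the continuity of $\mathcal{R}_w$ enforced by the matching conditions forces the cross-piece contribution to have the same exponential order as the dominant piece, scaled only by a bounded factor of the form $\int_{|y|\le R}J(y)e^{\pm(\lambda_0+\lambda_1)y}dy$; this perturbation can be absorbed because each leading positive coefficient above is strictly (not marginally) positive. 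With this extra care, the Steps~1--4 argument of Section~\ref{subsec-2-1} transfers verbatim and delivers $\mathcal{N}_0[\bar{\mathcal{W}}]\le 0$ a.e.\ in $\mathbb{R}$, completing the proof of Proposition~\ref{Prop-supersol nonlocal}.
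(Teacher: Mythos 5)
Your overall strategy matches the paper's: you build the same four-piece $\mathcal{R}_w$ and run the same four regional sign checks, and in the far-right region you use exactly the two identities $h(\lambda_0)=c_0^*\lambda_0$ and $h'(\lambda_0)=c_0^*$ (the latter being the paper's \eqref{c*0 formula}) to reduce the linearized operator to a second-order Taylor remainder weighted by $J$. That part of your algebra is correct and cleaner than the paper's, and your computation of the middle-piece and left-piece linear operators via $h(\lambda_1)$, $h(\lambda_2)$ also parallels what the paper does.

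There is, however, a concrete step that would fail. You carried over the scalar choice $\sigma(\xi)=4e^{-(\xi-\xi_1)/2}-4+4\xi-4\xi_1$, which gives $\sigma''(\xi)=e^{-(\xi-\xi_1)/2}$, so your leading negative term in Step~1 has size $\sim\varepsilon_1 e^{-\lambda_0\xi}e^{-(\xi-\xi_1)/2}$. This must dominate the quadratic remainders, which (as you correctly note) are $o(e^{-3\lambda_0\xi/2})$. That requires $\lambda_0+\tfrac12\le\tfrac{3\lambda_0}{2}$, i.e.\ $\lambda_0\ge 1$ --- which is not guaranteed in the nonlocal setting, where $\lambda_0$ is determined by $J$ and $f'(0;q^*)$ and can be arbitrarily small. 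The paper instead takes $\sigma(\xi)=\tfrac{1}{\lambda_0^2}e^{-\lambda_0(\xi-\xi_1)/2}-\tfrac{1}{\lambda_0^2}+\tfrac1{\lambda_0}(\xi-\xi_1)$, so $\sigma''=\tfrac14 e^{-\lambda_0(\xi-\xi_1)/2}$ and the leading term is genuinely of order $e^{-3\lambda_0\xi/2}$, which dominates for every $\lambda_0>0$. You need this $\lambda_0$-scaled $\sigma$; the rest of Step~1 then goes through.

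One further comment, more of a comparison than a gap. Your paragraph on the nonlocality at interfaces is the most careful part of the proposal, and it identifies a point the paper largely glosses over: the paper's Steps~2--4 plug the single-piece formula for $\mathcal{R}_w(\xi-y)$ into $J\ast\mathcal{R}_w$ even where $\xi-y$ falls in an adjacent piece, without explicitly bounding the cross-piece mismatch. Your proposed remedy (taking the pieces wider than $\mathrm{supp}\,J$ and absorbing the boundary-layer correction into the strictly positive leading coefficients) is a reasonable strategy, but as written it is a sketch rather than an estimate: the claimed bound of the form $\int_{|y|\le R}J(y)e^{\pm(\lambda_0+\lambda_1)y}\,dy$ is not derived, and it is not obvious that a factor of that size is uniformly small compared to $h(\lambda_1)+c_0^*\lambda_1-f'(0)-K_2$ when $\lambda_1$ is taken large. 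If you pursue this route, you should either verify that bound quantitatively, or adopt the paper's cruder device of discarding $J\ast\mathcal{R}_w$ by sign and using $\int J(y)e^{\lambda y}\,dy\le e^{\lambda L}$ for the left-piece estimate, which avoids needing an exact single-piece identity in the boundary layers.
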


In general, we may call such $\bar{\mathcal{W}}$ a {\em super-solution} of $\mathcal{N}_0[\cdot]=0$.
Next, we shall construct $\mathcal{R}_w(\xi)$ like what we have done in \S \ref{subsec-2-1} to prove Proposition~\ref{Prop-supersol nonlocal}.
Hereafter, assumptions (A1)-(A3), (A6), and (A7)  are always assumed.

\begin{figure}
\begin{center}
\begin{tikzpicture}[scale = 1.1]
\draw[thick](-6,0) -- (7,0) node[right] {$\xi$};
\draw [semithick] (-6, -0.1) to [ out=0, in=150] (-4,-0.4) to [out=30, in=230] (-3.5,0)  to [out= 40, in=180] (-3,0.2) to [out=15, in=220] (1,1.6) to [out=70,in=190] (1.5,2) to [out=0,in=170] (7,0.3);
\node[below] at (1,1.2) {$\xi_{1}+\delta_1$};
\node[below] at (0.8,0) {$0$};
\draw[dashed] [thick] (-3.5,0)-- (-3.5,0.3);
\node[above] at (-3.5,0.3) {$\xi_2$};
\draw[dashed] [thick] (-3,0.2)-- (-3,-0.5);
\draw[dashed] [thick] (-4,0)-- (-4,1);
\node[above] at (-4,0.8) {$\xi_2-\delta_3$};
\draw[dashed] [thick] (1,0)-- (1,0.65);
\draw[dashed] [thick] (1,1.15)-- (1,1.6);
\node[below] at (-3,-0.4) {$\xi_2+\delta_2$};
\draw [thin] (-3.85,-0.31) arc [radius=0.2, start angle=40, end angle= 140];
\node[above] at (-4,-0.36) {$\alpha_3$};
\draw [thin] (-2.8,0.255) arc [radius=0.2, start angle=30, end angle= 175];
\node[above] at (-3,0.3) {$\alpha_2$};
\draw [thin] (1.1,1.8) arc [radius=0.2, start angle=70, end angle= 220];
\node[above] at (1,1.8) {$\alpha_1$};
\end{tikzpicture}
\caption{the construction of $\mathcal{R}_w(\xi)$.}\label{Figure scalar nl}
\end{center}
\end{figure}
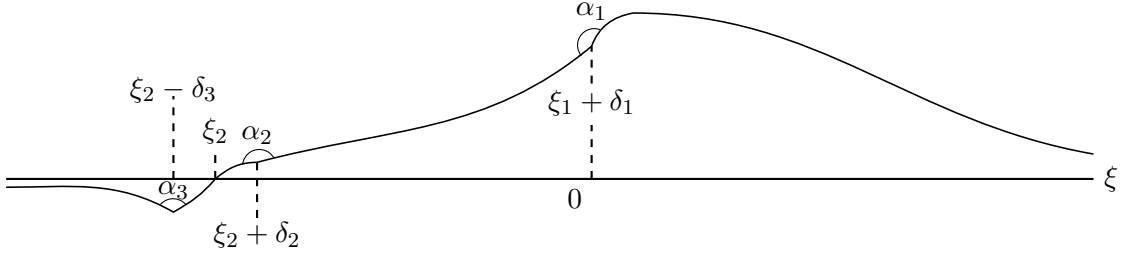

Let $\xi_1,\xi_2$ be chosen like that in Lemma \ref{lm: divide R to 3 parts}. Similar as $R_w(\xi)$ constructed in \S\ref{subsec-2-1}, we shall construct auxiliary function $\mathcal{R}_w(\xi)$ (also see Figure \ref{Figure scalar nl}) as follows:
\begin{equation}\label{definition of Rw nonlocal}
\mathcal{R}_w(\xi)=\begin{cases}
\varepsilon_1\sigma(\xi) e^{-\lambda_0\xi},&\ \ \mbox{for}\ \ \xi\ge\xi_{1}+\delta_1,\\
\varepsilon_2 e^{\lambda_1\xi},&\ \ \mbox{for}\ \ \xi_{2}+\delta_2\le\xi\le\xi_{1}+\delta_1,\\
\varepsilon_3\sin(\delta_4(\xi-\xi_2)),&\ \ \mbox{for}\ \ \xi_2-\delta_3\le\xi\le\xi_2+\delta_2,\\
-\varepsilon_4e^{\lambda_2\xi},&\ \ \mbox{for}\ \ \xi\le\xi_2-\delta_3,
\end{cases}
\end{equation}
where $\delta_{i=1,\cdots,4}>0$, $\lambda_{1,2}>0$, and $\sigma(\xi)>0$ will be
determined such that $\bar{\mathcal{W}}(\xi)$ satisfies \eqref{tw super solution scalar nonlocal}. Moreover, we should choose
$\varepsilon_{j=1,\cdots,4}\ll A_0$ ($A_0$ is defined in \eqref{AS-U-infty-for-contradiction scalar nonlocal}) such that $\mathcal{R}_w(\xi)\ll {\mathcal{W}}_*(\xi)$ and $\bar{\mathcal{W}}(\xi)$ is continuous for all $\xi\in\mathbb{R}$.

Since $f(\cdot;q^*)\in C^2$, there exist $K_1>0$ and $K_2>0$ such that
\bea\label{K1K2 nonlocal}
|f''({\mathcal{W}}_*(\xi);q^*)|< K_1,\quad |f'({\mathcal{W}}_*(\xi);q^*)|< K_2\quad \mbox{for all}\quad\xi\in\mathbb{R}.
\eea
We set $\lambda_1>0$ 
large enough such that
\begin{equation}\label{condition on lambda 1 nonlocal}
-c_0^*\lambda_1+1+K_2<0\ \text{and}\ \lambda_1>\frac{4K_2}{c_0^*}.
\end{equation}

Furthermore, there exists $K_3>0$ such that
\begin{equation}\label{condition on xi_2 scalar nonlocal}
f'({\mathcal{W}}_*(\xi); q^*)\le -K_3<0\quad\text{for all}\quad\xi\le \xi_2.
\end{equation}
Since the kernel $J$ has a compact support, without loss of generality, we assume $J\ge0$ on $[-L,L]$, and $J=0$ for $x\in(-\infty,-L)]\cup[L,\infty)$.
We define $\mu_0=\mu_{q^*,c^*_0}$ which is the unique positive root obtained from Proposition \ref{prop: asy tw - infty nonlocal} with $q=q^*$ and $c=c^*_0$.
Then, by setting $0<\lambda_2<\mu_0$ sufficiently small, we have
\bea\label{lambda 2 nonlocal}
1+K_3-e^{\lambda_2L}-c_0^*\lambda_2>0.
\eea

We now divide the proof into several steps.

\noindent{\bf{Step 1}:} We consider $\xi\in[\xi_1+\delta_1,\infty)$ where $\delta_1>0$ is small enough and will be determined in Step 2.
In this case, we have
\beaa
\mathcal{R}_w(\xi)=\varepsilon_1\sigma(\xi)\ e^{-\lambda_0\xi}
\eeaa
for some small $\varepsilon_1\ll A_0$.

Note that $\mathcal{W}_*$ satisfies \eqref{scalar nonlocal tw-parameter s} with $c=c_0^*$.
By some  straightforward computations, we have
\begin{equation}\label{N0 inequality step 1 nonlocal}
\begin{aligned}
\mathcal{N}_0[\bar{\mathcal{W}}]=&-J\ast \mathcal{R}_w+\mathcal{R}_w-c_0^*\mathcal{R}'_w-f(\mathcal{W}_*;q^*)+f(\mathcal{W}_*-\mathcal{R}_w;q^*+\delta_0)\\
=&-J\ast \mathcal{R}_w+\mathcal{R}_w-c_0^*\mathcal{R}'_w-f(\mathcal{W}_*;q^*)+f(\mathcal{W}_*-\mathcal{R}_w;q^*)\\
&-f(\mathcal{W}_*-\mathcal{R}_w;q^*)+f(\mathcal{W}_*-\mathcal{R}_w;q^*+\delta_0).
\end{aligned}
\end{equation}
By assumptions (A1) and (A2), and the statement (1) of Lemma \ref{lm: divide R to 3 parts}, since $\mathcal{R}_w\ll \mathcal{W}_*\ll 1$ for $\xi\in[\xi_1+\delta_1,\infty)$, we have
\begin{equation}\label{estimate 1 nonlocal}
-f(\mathcal{W}_*;q^*)+f(\mathcal{W}_*-\mathcal{R}_w;q^*)= -f'(0;q^*)\mathcal{R}_w+f''(0;q^*)(\frac{\mathcal{R}^2_w}{2}-\mathcal{W}_*\mathcal{R}_w)+o((\mathcal{W}_*)^2),
\end{equation}
\begin{equation}\label{estimate 2 nonlocal}
-f(\mathcal{W}_*-\mathcal{R}_w;q^*)+f(\mathcal{W}_*-\mathcal{R}_w;q^*+\delta_0)\le C_1\delta_0(\mathcal{W}_*-\mathcal{R}_w)^2+o((\mathcal{W}_*)^2).
\end{equation}

From \eqref{formula of c_NL}, \eqref{K1K2 nonlocal}, \eqref{N0 inequality step 1 nonlocal}, \eqref{estimate 1 nonlocal}, \eqref{estimate 2 nonlocal}, and Lemma \ref{lm: divide R to 3 parts}, we have
\begin{equation}\label{estimate 4 nonlocal}
\begin{aligned}
\mathcal{N}_0[\bar{\mathcal{W}}]\le &-\varepsilon_1e^{-\lambda_0\xi}\Big(\int_{\mathbb{R}}J(y)[\sigma(\xi-y)-\sigma(\xi)] e^{\lambda_0y}dy\Big)-c^*_0\sigma'e^{-\lambda_0\xi}\\
&+K_1(\frac{\mathcal{R}_w^2}{2}+\mathcal{W}_*\mathcal{R}_w)+C_1\delta_0\mathcal{W}_*^2+o((\mathcal{W}_*)^2).
\end{aligned}
\end{equation}
Let $h(\lambda)$ be defined as that in Remark \ref{rm:lambda_0}. Since $(h(\lambda)/\lambda)'=0$ when $\lambda=\lambda_0$, from \eqref{formula of c_NL}, we get
\begin{equation}\label{c*0 formula}
c^*_0=\int_{\mathbb{R}}yJ(y)e^{\lambda_0y}dy.
\end{equation}
Then, it follows from \eqref{estimate 4 nonlocal} and \eqref{c*0 formula} that
\begin{equation}\label{estimate 3 nonlocal}
\begin{aligned}
\mathcal{N}_0[\bar{\mathcal{W}}]\le &-\varepsilon_1e^{-\lambda_0\xi}\int_{\mathbb{R}}J(y)[\sigma(\xi-y)-\sigma(\xi)+y\sigma'(\xi)] e^{\lambda_0y}dy\\
&+K_1(\frac{\mathcal{R}_w^2}{2}+\mathcal{W}_*\mathcal{R}_w)+C_1\delta_0\mathcal{W}_*^2+o((\mathcal{W}_*)^2).
\end{aligned}
\end{equation}

Now, we define
$$\sigma(\xi):=\frac{1}{\lambda_0^2}e^{-\frac{\lambda_0}{2}(\xi-\xi_1)}-\frac{1}{\lambda_0^2}+\frac{1}{\lambda_0}\xi-\frac{1}{\lambda_0}\xi_1$$
which satisfies 
$$\sigma(\xi_1)=0,\ \sigma'(\xi)=\frac{1}{\lambda_0}-\frac{1}{2\lambda_0}e^{-\frac{\lambda_0}{2}(\xi-\xi_1)}.$$
 Moreover, $\sigma(\xi)= O(\xi)$ as $\xi\to\infty$ implies that $\mathcal{R}_w$ satisfies \eqref{decay rate of Rw nonlocal}.

By some straightforward computation, we have
\begin{equation*}
\int_{\mathbb{R}}J(y)[\sigma(\xi-y)-\sigma(\xi)+y\sigma'(\xi)] e^{\lambda_0y}dy=\frac{1}{\lambda_0^2}e^{-\frac{\lambda_0}{2}(\xi-\xi_1)}\int_{\mathbb{R}}J(y)e^{\lambda_0 y}[e^{\frac{\lambda_0y}{2}}-1-\frac{\lambda_0y}{2}]dy.
\end{equation*}
Notice that, the function 
$$g(y):=e^{\frac{\lambda_0y}{2}}-1-\frac{\lambda_0y}{2}\ge 0$$
 is convex and obtains minimum at $y=0$, and $J(x)=0$ for $|x|>L$. Therefore, we assert that there exists $K_4>0$ independent on $\xi_1$ such that
\begin{equation}\label{estimate K4 nonlocal}
-\varepsilon_1e^{-\lambda_0\xi}\int_{\mathbb{R}}J(y)[\sigma(\xi-y)-\sigma(\xi)+y\sigma'(\xi)] e^{\lambda_0y}dy\le -\varepsilon_1K_4e^{-\lambda_0\xi}e^{\frac{-\lambda_0(\xi-\xi_1)}{2}}.
\end{equation}

Then, from \eqref{estimate 3 nonlocal} and \eqref{estimate K4 nonlocal}, up to enlarging $\xi_1$ if necessary, we always have 
\beaa
\mathcal{N}_0[\bar{\mathcal{W}}]\le -\varepsilon_1K_4e^{-\frac{\lambda_0}{2}(\xi-\xi_1)}e^{-\lambda_0\xi}+K_1(\frac{\mathcal{R}_w^2}{2}+\mathcal{W}_*\mathcal{R}_w)+C_1\delta_0\mathcal{W}_*^2+o((\mathcal{W}_*)^2)\le 0
\eeaa
for all sufficiently small $\delta_0>0$
since $\mathcal{R}_w^2(\xi)$, $\mathcal{W}_*\mathcal{R}_w(\xi)$, and  $\mathcal{W}_*^2(\xi)$ are $o(e^{-\frac{3\lambda_0}{2}\xi})$ for $\xi\ge \xi_1$ from \eqref{AS-U-infty-for-contradiction scalar nonlocal} and the definition of $\mathcal{R}_w$.

\medskip

\noindent{\bf{Step 2:}} We consider $\xi\in[\xi_2+\delta_2,\xi_1+\delta_1]$ for some small $\delta_2>0$, and sufficiently small $\delta_1>0$ satisfying
\bea\label{cond delta 1 scalar nonlocal}
\frac{3}{2}e^{-\frac{\lambda_0\delta_1}{2}}+\delta_1\lambda_0<2.
\eea
From the definition of $\mathcal{R}_w$ in Step 1, it is easy to check that $\mathcal{R}'_w((\xi_1+\delta_1)^+)>0$ under the condition \eqref{cond delta 1 scalar nonlocal}.
In this case, we have $\mathcal{R}_w(\xi)=\varepsilon_2 e^{\lambda_1\xi}$
for some large $\lambda_1>0$ satisfying \eqref{condition on lambda 1 nonlocal}.

We first choose
\begin{equation}\label{condition on epsilon 2}
\varepsilon_2=\frac{\varepsilon_1}{\lambda^2_0}\Big(e^{-\frac{\lambda_0\delta_1}{2}}-1+\delta_1\lambda_0\Big)e^{-(\lambda_0+\lambda_1)(\xi_1+\delta_1)}>0
\end{equation}
such that $\mathcal{R}_w(\xi)$ is continuous at $\xi=\xi_{1}+\delta_1$. Then, from \eqref{condition on epsilon 2}, we have
$$\mathcal{R}'_w((\xi_1+\delta_1)^{+})=\varepsilon_1\sigma'(\xi_1+\delta_1)e^{-\lambda_0(\xi_1+\delta_1)}- \lambda_0\mathcal{R}_w(\xi_1+\delta_1),$$
$$\mathcal{R}'_w((\xi_1+\delta_1)^{-})=\lambda_1\mathcal{R}_w(\xi_1+\delta_1),$$
and $\mathcal{R}'_w((\xi_1+\delta_1)^{+})>\mathcal{R}'_w((\xi_1+\delta_1)^{-})$ is equivalent to
$$2(\lambda_0+\lambda_1)(e^{-\frac{\lambda_0\delta_1}{2}}-1+\delta_1\lambda_0)+\lambda_0e^{-\frac{\lambda_0\delta_1}{2}}<2\lambda_0,$$
which holds by taking $\delta_1$ sufficiently small.
This implies that $\angle\alpha_1<180^{\circ}$.

Since $J\ast \mathcal{R}_w\ge 0$, by some  straightforward computations, we have
\begin{equation*}
\begin{aligned}
\mathcal{N}_0[\bar{\mathcal{W}}]
\le&-(c_0^*\lambda_1-1)\mathcal{R}_w-f(\mathcal{W}_*;q^*)+f(\mathcal{W}_*-\mathcal{R}_w;q^*)\\
&-f(\mathcal{W}_*-\mathcal{R}_w;q^*)+f(\mathcal{W}_*-\mathcal{R}_w;q^*+\delta_0).
\end{aligned}
\end{equation*}
Thanks to \eqref{K1K2 nonlocal}, we have
$$-f(\mathcal{W}_*;q^*)+f(\mathcal{W}_*-\mathcal{R}_w;q^*)< K_2\mathcal{R}_w.$$
Moreover, by the assumption (A2), 
$$-f(\mathcal{W}_*-\mathcal{R}_w;q^*)+f(\mathcal{W}_*-\mathcal{R}_w;q^*+\delta_0)\le L_0\delta_0.$$
Then, 
since
$\lambda_1$ satisfies \eqref{condition on lambda 1 nonlocal}, we have
$$L_0\delta_0<\varepsilon_2(c_0^*\lambda_1-1-K_2) e^{\lambda_1(\xi_2+\delta_2)}$$
for all sufficiently small $\delta_0>0$,
which implies that $\mathcal{N}_0[\bar{\mathcal W}]\le 0$ for all $\xi\in[\xi_2+\delta_2,\xi_1+\delta_1]$.

\medskip

\noindent{\bf{Step 3:}} We consider $\xi\in[\xi_2-\delta_3,\xi_2+\delta_2]$ for some small $\delta_2,\delta_3>0$. In this case, we have $\mathcal{R}_w=\varepsilon_3\sin(\delta_4(\xi-\xi_2))$.
By applying the same argument as Claim \ref{cl scalar} we can obtain a claim as follows.
\begin{claim}
For any $\delta_2$ with $\delta_2>\frac{1}{\lambda_1}$, there exist
$\varepsilon_3>0$ and small $\delta_4>0$ satisfying
\bea\label{epsilon 3 nonlocal}
\varepsilon_3= \frac{\varepsilon_2e^{\lambda_1(\xi_2+\delta_2)}}{\sin(\delta_4\delta_2)}>0
\eea
such that $\mathcal{R}_w((\xi_2+\delta_2)^+)=\mathcal{R}_w((\xi_2+\delta_2)^-)$ and $\angle\alpha_2<180^{\circ}$.
\end{claim}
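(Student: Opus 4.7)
The plan is to reproduce the argument of Claim~\ref{cl scalar} essentially verbatim. What makes this possible is that near $\xi=\xi_2+\delta_2$ the function $\mathcal{R}_w$ is defined by the same two elementary formulas as in the local case (exponential to the right, sine to the left); the matching of values and the comparison of one-sided derivatives at this junction are purely algebraic facts which do not involve the dispersal operator $J\ast$.

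First I would impose continuity at $\xi=\xi_2+\delta_2$. Equating
$$\mathcal{R}_w((\xi_2+\delta_2)^+)=\varepsilon_2 e^{\lambda_1(\xi_2+\delta_2)}\quad\text{with}\quad \mathcal{R}_w((\xi_2+\delta_2)^-)=\varepsilon_3\sin(\delta_4\delta_2)$$
forces the identity \eqref{epsilon 3 nonlocal}. Positivity of $\varepsilon_3$ is automatic because we shall take $\delta_4>0$ small enough that $\sin(\delta_4\delta_2)\in(0,1)$, and $\varepsilon_2>0$ is already guaranteed by \eqref{condition on epsilon 2}.

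Next I would compare one-sided derivatives at $\xi_2+\delta_2$. A direct computation gives
$$\mathcal{R}_w'((\xi_2+\delta_2)^+)=\lambda_1\varepsilon_2 e^{\lambda_1(\xi_2+\delta_2)},\qquad \mathcal{R}_w'((\xi_2+\delta_2)^-)=\varepsilon_3\delta_4\cos(\delta_4\delta_2).$$
Substituting \eqref{epsilon 3 nonlocal} and using $\sin t\sim t$, $\cos t\to 1$ as $t\to 0^+$, the left-derivative tends to $\varepsilon_2 e^{\lambda_1(\xi_2+\delta_2)}/\delta_2$ as $\delta_4\to 0^+$. The angle condition $\angle\alpha_2<180^\circ$ translates to the strict jump inequality $\mathcal{R}_w'((\xi_2+\delta_2)^+)>\mathcal{R}_w'((\xi_2+\delta_2)^-)$, which is exactly what is needed for $\bar{\mathcal{W}}=\min\{\mathcal{W}_*-\mathcal{R}_w,1\}$ to satisfy the derivative-jump condition in Proposition~\ref{Prop-supersol nonlocal}. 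In the limit $\delta_4\to 0^+$ this inequality reduces to $\lambda_1>1/\delta_2$, i.e., precisely the hypothesis $\delta_2>1/\lambda_1$. By continuity in $\delta_4$ it persists for all sufficiently small $\delta_4>0$; fixing any such $\delta_4$ together with the $\varepsilon_3$ given by \eqref{epsilon 3 nonlocal} finishes the construction.

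I do not anticipate a substantive obstacle: no step appeals to the equation satisfied by $\mathcal{W}_*$ or to the nonlocal operator $J\ast$, only to the piecewise definition of $\mathcal{R}_w$. The one piece of bookkeeping is to ensure that the $\varepsilon_2$ used here is the one fixed by \eqref{condition on epsilon 2} in Step~2 and that $\delta_2$ can still be chosen in a window compatible with the subsequent differential inequality for $\mathcal{N}_0[\bar{\mathcal{W}}]$ (the analogue of $(1/\lambda_1,1/K_2)$ in the local case), but this has already been arranged through \eqref{condition on lambda 1 nonlocal}.
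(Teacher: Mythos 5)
Your proposal is correct and takes exactly the same approach as the paper, which simply invokes Claim~\ref{cl scalar} ("By applying the same argument as Claim~\ref{cl scalar}..."): you impose continuity to fix $\varepsilon_3$, compute both one-sided derivatives, let $\delta_4\to 0^+$ to reduce the strict jump inequality to $\delta_2>1/\lambda_1$, and then fix a small $\delta_4$. Your explicit remark that no step touches the nonlocal operator $J\ast$ or the equation for $\mathcal{W}_*$ is precisely the reason the paper can transfer the earlier argument verbatim.
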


Next, we verify the differential inequality of $\mathcal{N}_0[\bar{\mathcal W}]$ for $\xi\in[\xi_2-\delta_3,\xi_2+\delta_2]$.
Since the kernel $J$ has a compact support,
by some straightforward computations, we have
\begin{equation*}
\begin{aligned}
\mathcal{N}_0[\bar{\mathcal W}]=&\varepsilon_3\int_{-L}^LJ(y)\Big(\sin(\delta_4(\xi-\xi_2)-\sin(\delta_4(\xi-y-\xi_2))\Big)dy-c_0^*\varepsilon_3\delta_4\cos(\delta_4(\xi-\xi_2))\\
&-f(\mathcal{W}_*;s^*)+f(\mathcal{W}_*-\mathcal{R}_w;s^*)-f(\mathcal{W}_*-\mathcal{R}_w;s^*)+f(\mathcal{W}_*-\mathcal{R}_w;s^*+\delta_0)\\
\le &\varepsilon_3\int_{-L}^L\Big|\sin(\delta_4(\xi-\xi_2))-\sin(\delta_4(\xi-y-\xi_2))\Big|dy\\
&+K_2\varepsilon_3\sin(\delta_4(\xi-\xi_2))-c_0^*\varepsilon_3\delta_4\cos(\delta_4(\xi-\xi_2))+L_0\delta_0.
\end{aligned}
\end{equation*}

We first focus on $\xi\in[\xi_2,\xi_2+\delta_2]$. Notice that, the integral is defined on a bounded domain and we always set $\delta_4$ small. Up to decreasing $\delta_4$ if necessary, by Taylor series, we have
$$\sin(\delta_4(\xi-\xi_2-y))-\sin(\delta_4(\xi-\xi_2))\sim-y\delta_4^2\cos(\delta_4(\xi-\xi_2))-\frac{y^2\delta_4^4}{2}\sin(\delta_4(\xi-\xi_2)).$$
Then, by setting $\delta_4<c_0^*/2L$,
\begin{equation}\label{inequilty 1}
|y\delta_4^2\cos(\delta_4(\xi-\xi_2))|<c_0^*\delta_4\cos(\delta_4(\xi-\xi_2))/2.
\end{equation}
Therefore, we obtain from \eqref{inequilty 1} that
\begin{equation}\label{inequality 2}
\mathcal{N}_0[\bar{\mathcal W}]\le -\varepsilon_3\frac{c_0^*\delta_4}{2}\cos(\delta_4(\xi-\xi_2))+\varepsilon_3(K_2+\frac{L^2\delta_4^4}{2})\sin(\delta_4(\xi-\xi_2))+L_0\delta_0.
\end{equation}

By \eqref{epsilon 3 nonlocal} and the fact $x\cos x\to\sin x$ as $x\to 0$,
\beaa
\min_{\xi\in[\xi_2,\xi_2+\delta_2]}\frac{\delta_4\varepsilon_3c_0^*}{2}\cos(\delta_4(\xi-\xi_2))\to\frac{c_0^*\varepsilon_2e^{\lambda_1(\xi_2+\delta_2)}}{2\delta_2}
=\frac{c^*_0\mathcal{R}_w(\xi_2+\delta_2)}{2\delta_2}
\quad\text{as}\ \delta_4\to0.
\eeaa
Then, by \eqref{condition on lambda 1 nonlocal}, we can choose
$\delta_2\in(1/\lambda_1,c_0^*/4K_2)$ such that
\beaa
\frac{c^*_0\mathcal{R}_w(\xi_2+\delta_2)}{2\delta_2}>2 K_2\mathcal{R}_w(\xi_2+\delta_2)
\quad\text{for small}\ \delta_4<(\frac{2K_2}{\varepsilon_3L^2})^{\frac{1}{4}}.
\eeaa
Thus, we have
\beaa
\min_{\xi\in[\xi_2,\xi_2+\delta_2]}\frac{\delta_4\varepsilon_3c_0^*}{2}\cos(\delta_4(\xi-\xi_2))>(K_2+\frac{L^2\delta_4^4}{2})\mathcal{R}_w(\xi),
\eeaa
for all sufficiently small $\delta_4>0$.
Then, from \eqref{inequality 2}, up to decreasing $\delta_0>0$ if necessary, 
we see that $\mathcal{N}_0[\bar{\mathcal W}]\le 0$ for $\xi\in[\xi_2,\xi_2+\delta_2]$.

For $\xi\in[\xi_2-\delta_3,\xi_2]$, by the same argument
we can set $\delta_3>0$ small enough such that
$\mathcal{N}_0[\bar{\mathcal W}]\le 0$. This completes the Step 3.

\medskip

\noindent{\bf{Step 4:}} We consider $\xi\in(-\infty,\xi_2-\delta_3]$. In this case, we have $\mathcal{R}_w(\xi)=-\varepsilon_4e^{\lambda_2\xi}<0$.
Recall that we choose $0<\lambda_2<\mu_0$
and 
$$1-\mathcal{W}_*(\xi)\sim C_2 e^{\tilde\lambda\xi}\ \ \text{as}\ \ \xi\to-\infty.$$
Then, there exists $M_1>0$ such that 
$$\bar{\mathcal W}=\min\{\mathcal{W}_*-\mathcal{R}_w,1\}\equiv 1\ \ \text{for all}\ \ \xi\le -M_1,$$
and thus 
$$\mathcal{N}_0[\bar{\mathcal W}]\le 0\  \ \text{for all}\ \ \xi\le -M_1.$$ Therefore, we only need to show 
$$\mathcal{N}_0[\bar{\mathcal W}]\le 0\ \  \text{for all}\ \ -M_1\le\xi\le -\xi_2-\delta_3.$$

We first choose 
$$\varepsilon_4=\varepsilon_3\sin(\delta_4\delta_3)/e^{\lambda_2(\xi_2-\delta_3)}$$
such that $\mathcal{R}_w$ is continuous at $\xi_2-\delta_3$. It is easy to check that 
$$\mathcal{R}'_w((\xi_2-\delta_3)^+)>0>\mathcal{R}'_w((\xi_2-\delta_3)^-),$$
 and thus $\angle\alpha_3< 180^{\circ}$.

Since the kernel $J$ is trivial outside of $[-L,L]$, by some straightforward computations, we have
\begin{equation*}
\begin{aligned}
\mathcal{N}_0[\bar{\mathcal W}]
\le&-(e^{\lambda_2L}+c_0^*\lambda_2-1)\mathcal{R}_w-f(\mathcal{W}_*;q^*)+f(\mathcal{W}_*-\mathcal{R}_w;q^*)\\
&-f(\mathcal{W}_*-\mathcal{R}_w;q^*)+f(\mathcal{W}_*-\mathcal{R}_w;q^*+\delta_0).
\end{aligned}
\end{equation*}
From \eqref{condition on xi_2 scalar nonlocal} and $\mathcal{R}_w\le 0$, we have 
$$-f(\mathcal{W}_*;q^*)+f(\mathcal{W}_*-\mathcal{R}_w;q^*)< K_3\mathcal{R}_w<0.$$
Together with the assumption (A2), we have
\beaa
\mathcal{N}_0[\bar{\mathcal W}]\leq -(e^{\lambda_2L}+c_0^*\lambda_2-1-K_3)\mathcal{R}_w+L_0\delta_0\quad\text{for all}\quad \xi\in[-M,\xi_2-\delta_3].
\eeaa
In view of \eqref{lambda 2 nonlocal},
we can assert that 
$$\mathcal{N}_0[\bar{\mathcal W}]\le 0\ \ \text{for all}\ \ \xi\in[-M,\xi_2-\delta_3],$$
provided that $\delta_0$ is sufficiently small.
This completes the Step 4.

\subsection{Proof of Theorem \ref{th: threshold scalar equation nonlocal}}

\noindent

We are ready to prove Theorem~\ref{th: threshold scalar equation nonlocal} as follows.

\begin{proof}[Proof of Theorem~\ref{th: threshold scalar equation nonlocal}]
In view of Lemma~\ref{lem: th1-part1 nonlocal}, we have obtained \eqref{def of threshold scalar nonlocal}.
It suffices to show that \eqref{asy tw threshold scalar nonlocal} holds if and only if $q=q^*$.
From the discussion from Step 1 to Step 4 in \S \ref{subsec-3-1},
we are now equipped with an auxiliary function $\mathcal{R}_w(\xi)$ 
defined as in \eqref{definition of Rw nonlocal} such that 
$$\bar{\mathcal W} (\xi)=\min \{\mathcal{W}_*(\xi)-\mathcal{R}_w(\xi),1\},$$
 which is independent of the choice of all sufficiently small $\delta_0>0$, forms
a super-solution satisfying \eqref{tw super solution scalar nonlocal}. By the comparison argument used in the proof of Theorem \ref{th: threshold scalar equation}, similarly we can show
\beaa
q=q^* \quad \Longrightarrow \quad \mbox{\eqref{asy tw threshold scalar nonlocal} holds}.
\eeaa
Therefore, it suffices to prove
\bea\label{th1:goal-2 nonlocal}
\mbox{\eqref{asy tw threshold scalar nonlocal} holds} \quad \Longrightarrow \quad q=q^*
\eea
by the sliding method.

We assume by contradiction that there exists $q_0\in(0,q^*)$ such that
the corresponding minimal traveling wave satisfies
\bea\label{W-S0+infty nonlocal}
\mathcal{W}_{q_0}(\xi)=B_0 e^{-\lambda_0\xi}+o(e^{-\lambda_0\xi})\quad\text{as}\quad \xi\to+\infty
\eea
for some $B_0>0$. For $\xi\approx -\infty$, from Proposition \ref{prop: asy tw - infty nonlocal}, we have
\bea\label{W-S0-infty nonlocal}
1-\mathcal{W}_{q_0}(\xi)=C_0 e^{\tilde\mu_0\xi}+o(e^{\tilde\mu_0\xi})\quad\text{as}\quad \xi\to-\infty
\eea
for some $C_0>0$, where $\tilde\mu_0=\mu_{s_0,c_0^*}$.
Recall that the asymptotic behavior of $\mathcal{W}_{q^*}$ at $\pm\infty$ satisfies
\bea\label{W-S*-pm-infty nonlocal}
\mathcal{W}_{q^*}(\xi)=B e^{-\lambda_0\xi}+o(e^{-\lambda_0\xi})\ \text{as}\ \xi\to+\infty;\ \ 1-\mathcal{W}_{q^*}(\xi)=C e^{\mu_0\xi}+o(e^{\mu_0\xi})\ \text{as}\ \xi\to-\infty
\eea
for some $B,C>0$, where $\mu_0=\mu_{q^*,c^*_0}$. In view of the assumption (A3), we have $\mu_0>\tilde\mu_0$ since $q^*>q_0$.
Combining \eqref{W-S0+infty nonlocal}, \eqref{W-S0-infty nonlocal} and \eqref{W-S*-pm-infty nonlocal}, there exists $0<L<\infty$ sufficiently large such that
$\mathcal{W}_{q^*}(\xi-L)> \mathcal{W}_{q_0}(\xi)$ for all $\xi\in\mathbb{R}$. Now, we define
\beaa
L^*:=\inf\{L\in\mathbb{R}\ |\ \mathcal{W}_{q^*}(\xi-L)\ge \mathcal{W}_{q_0}(\xi)\ \text{for all}\ \xi\in\mathbb{R}\}.
\eeaa
By the continuity, we have 
$$\mathcal{W}_{q^*}(\xi-L^*)\geq \mathcal{W}_{q_0}(\xi)\ \text{for all}\ \ \xi\in\mathbb{R}.$$
 If there exists $\xi^*\in\mathbb{R}$ such that
$\mathcal{W}_{q^*}(\xi^*-L^*)= \mathcal{W}_{q_0}(\xi^*)$, by the strong maximum principle, we have 
$$\mathcal{W}_{q^*}(\xi-L^*)=\mathcal{W}_{q_0}(\xi)\ \ \text{for all}\ \ \xi\in\mathbb{R},$$
which is impossible since $\mathcal{W}_{q^*}(\cdot-L^*)$ and $\mathcal{W}_{q_0}(\cdot)$ satisfy different equations. Consequently,
$$\mathcal{W}_{q^*}(\xi-L^*)> \mathcal{W}_{q_0}(\xi)\ \ \text{for all}\ \ \xi\in\mathbb{R}.$$ 
In particular, we have
\beaa
\lim_{\xi\to+\infty}\frac{\mathcal{W}_{q^*}(\xi-L^*)}{\mathcal{W}_{q_0}(\xi)}\geq1.
\eeaa
Furthermore, we can claim that
\bea\label{limit=1 nonlocal}
\lim_{\xi\to+\infty}\frac{\mathcal{W}_{q^*}(\xi-L^*)}{\mathcal{W}_{q_0}(\xi)}=1.
\eea
Otherwise, if the limit in \eqref{limit=1 nonlocal} is strictly bigger than 1, together with $\mu_0>\tilde \mu_0$ and
\beaa
\lim_{\xi\to-\infty}\frac{1-\mathcal{W}_{q^*}(\xi-L^*)}{1-\mathcal{W}_{q_0}(\xi)}=0,
\eeaa
we can easily  find $\varepsilon>0$ sufficiently small such that
$$\mathcal{W}_{q^*}(\xi-(L^*+\varepsilon))> \mathcal{W}_{q_0}(\xi)\ \ \text{for all}\ \ \xi\in\mathbb{R},$$
which contradicts the definition of $L^*$.
As a result, from \eqref{W-S0+infty nonlocal}, \eqref{W-S*-pm-infty nonlocal} and \eqref{limit=1 nonlocal}, we obtain $B_0=Be^{L^*}$.

On the other hand, we set $\widehat{\mathcal{W}}(\xi)=\mathcal{W}_{q^*}(\xi-L^*)-\mathcal{W}_{s_0}(\xi)$. Then $\widehat{\mathcal{W}}(\xi)$ satisfies
\bea\label{W-hat-eq2 nonlocal}
J\ast\widehat{\mathcal{W}}+c^*_0\widehat{\mathcal{W}}'+(f'(0)-1)\widehat{\mathcal{W}}+J(\xi)=0, \quad \xi\in\mathbb{R},
\eea
where
$$J(\xi)=f(\mathcal{W}_{s^*};s^*)- f'(0)\mathcal{W}_{s^*}-f(\mathcal{W}_{s_0};s_0)+ f'(0)\mathcal{W}_{s_0}.$$
By the assumption  (A1) and Taylor's Theorem, there exist $\eta_1\in(0, W_{s^*})$ and $\eta_2\in(0,W_{s_0})$ such that
\beaa
J(\xi)=J_1(\xi)+J_2(\xi)
\eeaa
where
$$J_1(\xi):=f''(\eta_1;q^*)(\mathcal{W}_{q^*}+\mathcal{W}_{q_0})\widehat{\mathcal{W}},$$ 
$$J_2(\xi):=[f''(\eta_1;q^*)-f''(\eta_2;q_0)]\mathcal{W}^2_{q_0}.$$
 It is easy to see that $J_1(\xi)=o(\widehat{\mathcal{W}})$ for $\xi\approx+\infty$. Next, we will show $J_2(\xi)=o(\widehat{\mathcal{W}})$ for $\xi\approx+\infty$.

Since $f''(0;s^*)>f''(0;s_0)$ (from the assumption (A3)), we can find small $\delta>0$ such that
$$\min_{\eta\in[0,\delta]}f''(\eta;q^*)>\max_{\eta\in[0,\delta]}f''(\eta;q_0)$$
and thus
there exist $\kappa_1,\kappa_2>0$ such that
\bea\label{J-lower bound nonlocal}
\kappa_1e^{-2\lambda_0\xi}\ge J_2(\xi)=[f''(\eta_1;q^*)-f''(\eta_2;q_0)]\mathcal{W}^2_{q_0}(\xi)\ge \kappa_2 e^{-2\lambda_0\xi}\quad \mbox{for all large $\xi$}.
\eea

We now claim that $J_2(\xi)=o(\widehat{\mathcal{W}})$ as $\xi\to+\infty$.
For contradiction, we assume that it is not true. Then there exists $\{\xi_n\}$ with
$\xi_n\to+\infty$ as $n\to\infty$ such that for some $\kappa_3>0$,
\bea\label{kappa3 nonlocal}
\frac{J_2(\xi_n)}{\widehat{\mathcal{W}}(\xi_n)}\geq \kappa_3\quad \mbox{for all $n\in\mathbb{N}$.}
\eea
Set $\widehat{\mathcal{W}}(\xi)=\alpha(\xi)e^{-2\lambda_0\xi}$, where $\alpha(\xi)>0$ for all $\xi$.
By substituting it into \eqref{W-hat-eq2 nonlocal},
we have 
\begin{equation}\label{alpha-eq nonlocal}
\begin{aligned}
L(\xi):=&\Big(\int_{\mathbb{R}}J(y)\alpha(\xi-y)e^{2\lambda_0y}dy+(f'(0)-1-2\lambda_0c^*_0)\alpha(\xi)+ c^*_0\alpha'(\xi)\Big)e^{-2\lambda_0\xi}\\
&+J_1(\xi)+J_2(\xi)=0
\end{aligned}
\end{equation}
for all large $\xi$.
By \eqref{J-lower bound nonlocal} and \eqref{kappa3 nonlocal}, we have 
\bea\label{alpha-bdd nonlocal}
0<\alpha(\xi_n)\leq \frac{\kappa_1}{\kappa_3}\quad  \mbox{for all $n\in\mathbb{N}$.}
\eea
Now, we will reach a contradiction by dividing the behavior of $\alpha(\cdot)$ into two cases:
\begin{itemize}
    \item[(i)] $\alpha(\xi)$ oscillates for all large $\xi$;
    \item[(ii)] $\alpha(\xi)$ is monotone for all large $\xi$.
\end{itemize}

For case (i), there exist local minimum points $\eta_n$ of $\alpha$ with $\eta_n\to\infty$ as $n\to\infty$ such that
\beaa
\alpha(\eta_n)>0\quad\text{and}\quad \alpha'(\eta_n)=0\quad  \mbox{for all $n\in\mathbb{N}$.}
\eeaa
Without loss of generality, we also assume that 
\bea\label{local mini L}
\alpha(\eta_n)\ge\alpha(\xi)\quad\text{for all}\quad \xi\in[\eta_n-L,\eta_n+L].
\eea
Then from \eqref{formula of c_NL},  \eqref{alpha-eq nonlocal} yields that
\begin{equation*}
L(\eta_n)>\Big(\int_{\mathbb{R}}J(y)(\alpha(\eta_n-y)-\alpha(\eta_n))e^{2\lambda_0y}dy\Big)e^{-2\lambda_0\eta_n}+J_1(\xi_n)+J_2(\eta_n)
\end{equation*}
Together with \eqref{J-lower bound nonlocal} and $J_1(\xi)=o(\widehat{\mathcal{W}}(\xi))$,
from \eqref{alpha-eq nonlocal} and \eqref{local mini L}, we see that
\beaa
0=L(\eta_n)\geq o(1)\alpha(\eta_n)e^{-2\lambda_0\eta_n}+\kappa_2e^{-2\lambda_0\eta_n}>0
\eeaa
for all large $n$, which reaches a contradiction. 

For case (ii),
due to \eqref{alpha-bdd nonlocal}, there exists $\alpha_0\in[0, \kappa_1/\kappa_3]$
such that $\alpha(\xi)\to \alpha_0$ as $\xi\to\infty$. Hence, we can find subsequence $\{\eta_j\}$ that tends to $\infty$ such that $\alpha'(\eta_j)\to0$ and
$\alpha(\eta_j)\to \alpha_0$ as $n\to\infty$.
From \eqref{alpha-eq nonlocal} we deduce that
\beaa
0=L(\eta_j)\geq (o(1)+ \kappa_2)e^{-2\lambda_0\eta_j}>0
\eeaa
for all large $j$, which reaches a contradiction. 
Therefore, we have proved that
$J_2(\xi)=o(\widehat{\mathcal{W}})$ as $\xi\to\infty$.
Consequently, we have
\beaa
J(\xi)=J_1(\xi)+J_2(\xi)=o(\widehat{\mathcal{W}}(\xi))\quad \mbox{as $\xi\to\infty$.}
\eeaa

Now, by the proof of Proposition \ref{prop:correction-U-linear-decay}, we can assert that the asymptotic behavior of $\widehat{\mathcal{W}}(\xi)$ at $\xi=+\infty$ satisfies
\beaa
\widehat{\mathcal{W}}(\xi)=(C_1\xi+C_2)e^{-\beta \xi}+o(e^{-\beta\xi})\quad \mbox{as $\xi\to\infty$},
\eeaa
in which $C_1$ and $C_2$ can not be equal to $0$ simultaneously.
However, by $B_0=Be^{L^*}$, the asymptotic behaviors \eqref{W-S0+infty nonlocal} and \eqref{W-S*-pm-infty nonlocal} yield  $C_1=0$ and $C_2=0$, which reaches a contradiction.
Therefore, \eqref{th1:goal-2 nonlocal} holds, and the proof is complete.
\end{proof}

\bigskip

\noindent{\bf Acknowledgement.}
Maolin Zhou is supported by the National Key Research and Development Program of China (2021YFA1002400).
Chang-Hong Wu is supported by the Ministry of Science and Technology of Taiwan.
Dongyuan Xiao is supported by the Japan Society for the Promotion of Science P-23314.

\end{document}